\newtheorem{thm}{Theorem}[section]
\newtheorem{lem}[thm]{Lemma}
\newtheorem{prop}[thm]{Proposition}
\theoremstyle{definition}
\newtheorem{defn}[thm]{Definition}
\newtheorem{example}[thm]{Example}
\theoremstyle{remark}
\newtheorem{rem}[thm]{Remark}
\numberwithin{equation}{section}
\begin{document}
\title[Generalized almost periodic and generalized asymptotically...]{Generalized almost periodic and generalized asymptotically almost periodic type functions in Lebesgue spaces with variable exponents
$L^{p(x)}$}

\author{Toka Diagana}
\address{Department of Mathematics, Howard University, 2441 6th Street NW, Washington, DC
20059, USA}
\email{tokadiag@gmail.com}

\author{Marko Kosti\' c}
\address{Faculty of Technical Sciences,
University of Novi Sad,
Trg D. Obradovi\' ca 6, 21125 Novi Sad, Serbia}
\email{marco.s@verat.net}

{\renewcommand{\thefootnote}{} \footnote{2010 {\it Mathematics
Subject Classification.} 34C27, 35B15, 46E30.
\\ \text{  }  \ \    {\it Key words and phrases.} Lebesgue spaces with variable exponents,
generalized almost periodicity with variable exponents, generalized asymptotical almost periodicity with variable exponents, abstract Volterra integro-differential equations, abstract fractional differential equations.
\\  \text{  }  \ \ Please, insert the number of your grant here... The second named author is partially supported by grant 174024 of Ministry
of Science and Technological Development, Republic of Serbia.}}

\begin{abstract}
In the paper under review, we introduce the notions of various types of generalized (asymptotical) almost periodicity with variable exponents. 
We define and thoroughly analyze an important subclass of (asymptotically) Stepanov almost periodic functions which contains all (asymptotically) almost periodic functions.
We provide a great number of relevant applications to abstract Volterra integro-differential equations in Banach spaces.
\end{abstract}
\maketitle

\section{Introduction and Preliminaries}\label{intro1}

As mentioned in the abstract, the main aim of this paper is to introduce the notions of various types of generalized (asymptotical) almost periodicity with variable exponents. For a given measurable function $p : [0,1] \rightarrow [1,\infty],$ $p\in {\mathcal P}([0,1])$ shortly, we define the notion of an $S^{p(x)}$-almost periodic function and further analyze this concept. For $p(x)\equiv p \geq 1,$ the introduced notion is equivalent to the usually considered notion of $S^{p}$-almost periodicity. We prove that any almost periodic function has to be $S^{p(x)}$-almost periodic as well as that any $S^{p(x)}$-almost periodic function has to be Stepanov $1$-almost periodic ($p\in {\mathcal P}([0,1])$); similar assertions hold for the asymptotical $S^{p(x)}$-almost periodicity...

The organization and main ideas of this paper are briefly described as follows...

We use the standard notation throughout the paper.
Unless specifed otherwise,
we assume
that $(X,\| \cdot \|)$ is a complex Banach space. If $Y$ is also such a space, then we denote by
$L(X,Y)$ the space of all continuous linear mappings from $X$ into
$Y;$ $L(X)\equiv L(X,X).$ Assuming $A$ is a closed linear operator
acting on $X,$
then the domain, kernel space and range of $A$ will be denoted by
$D(A),$ $N(A)$ and $R(A),$
respectively. Since no confusion
seems likely, we will identify $A$ with its graph. By $[D(A)]$ we designate the Banach space 
$D(A)$ equipped with the graph norm $\|x\|_{[D(A)]}:=\|x\|+\|Ax\|,$ $x\in D(A).$

If the numbers $s\in {\mathbb R}$ and $\theta \in (0,\pi]$ are given in advance, then we set $\lceil s \rceil:=\inf \{
l\in {\mathbb Z} : s\leq l \}$ and $\Sigma_{\theta}:=\{ z\in {\mathbb C} \setminus \{0\} :
|\arg (z)|<\theta \}.$ 
The symbol $C(I: X),$ where $I={\mathbb R}$ or $I=[0,\infty),$ stands for the space consisting of all $X$-valued
continuous functions on the interval $I$. By $C_{b}(I: X)$ and $BUC(I: X)$ we denote the closed subspaces of $C(I: X)$ consisting of all bounded and bounded uniformly continuous functions, respectively. Any of this spaces is a Banach one equipped with the sup-norm.
The Gamma function is denoted by
$\Gamma(\cdot)$ and the principal branch is always used to take
the powers; the convolution like
mapping $\ast$ is given by $f\ast g(t):=\int_{0}^{t}f(t-s)g(s)\,
ds .$ Set $g_{\zeta}(t):=t^{\zeta-1}/\Gamma(\zeta),$ $\zeta>0.$

The first conference on fractional calculus and fractional differential equations was held in New Haven (1974).  Since then, fractional calculus has gained more
and more attention due to its wide applications in various fields of science, such as mathematical physics,
engineering, biology, aerodynamics, chemistry, economics etc. Fairly complete information about fractional calculus and fractional
differential equations can be obtained
by consulting
\cite{bajlekova}, \cite{Diet}, \cite{kilbas}, \cite{knjigaho} and references cited therein.
The
Mittag-Leffler function $E_{\alpha,\beta}(z),$ defined by
$$
E_{\alpha,\beta}(z):=\sum_{n=0}^{\infty}\frac{z^{n}}{\Gamma(\alpha
n+\beta)},\quad z\in {\mathbb C},
$$
is known to play a crucial role in the analysis of fractional
differential equations. Set, for short,
$
E_{\alpha}(z):=E_{\alpha,1}(z),$ $ z\in
{\mathbb C}.
$
Assume that $\gamma \in (0,1).$
The Wright function\index{function!Wright}
$\Phi_{\gamma}(\cdot)$ is defined by
$$
\Phi_{\gamma}(t):={{\mathcal
L}^{-1}}\bigl(E_{\gamma}(-\lambda)\bigr)(t),\quad t\geq 0,
$$
where ${\mathcal L}^{-1}$ denotes the inverse Laplace transform.
The Wright function $\Phi_{\gamma}(\cdot)$
is an entire function which can be equivalently introduced by the formula
$$
\Phi_{\gamma}(z)=\sum \limits_{n=0}^{\infty}
\frac{(-z)^{n}}{n! \Gamma (1-\gamma -\gamma n)},\quad z\in {\mathbb C}.
$$

Let $\gamma \in (0,1).$  Then
the Caputo fractional derivative\index{fractional derivatives!Caputo}
${\mathbf D}_{t}^{\gamma}u(t)$ is defined for those functions
$u :[0,\infty) \rightarrow X$ satisfying that, for every $T>0,$ we have $u _{| (0,T]}(\cdot) \in C((0,T]: X),$ $u(\cdot)-u(0) \in L^{1}((0,T) : X)$
and $g_{1-\gamma}\ast (u(\cdot)-u(0)) \in W^{1,1}((0,T) : X),$
by
$$
{\mathbf
D}_{t}^{\gamma}u(t)=\frac{d}{dt}\Biggl[g_{1-\gamma}
\ast \Bigl(u(\cdot)-u(0)\Bigr)\Biggr](t),\quad t\in (0,T];
$$
see \cite[p. 7]{bajlekova} for the notion of Sobolev space $W^{1,1}((0,T) : X).$
The Weyl-Liouville fractional derivative
$D_{t,+}^{\gamma}u(t)$ of order $\gamma$ is defined for those continuous functions
$u : {\mathbb R} \rightarrow X$
such that $t\mapsto \int_{-\infty}^{t}g_{1-\gamma}(t-s)u(s)\, ds,$ $t\in {\mathbb R}$ is a well-defined continuously differentiable mapping, by
$$
D_{t,+}^{\gamma}u(t):=\frac{d}{dt}\int_{-\infty}^{t}g_{1-\gamma}(t-s)u(s)\, ds,\quad t\in {\mathbb R}.
$$
Set $
{\mathbf D}_{t}^{1}u(t):=(d/dt)u(t)$ and $
D_{t,+}^{1}u(t):=-(d/dt)u(t).$

\subsection{Multivalued
linear operators and degenerate resolvent operator families}\label{mlos} 

Suppose that $X$ and $Y$ are two Banach spaces.
A multivalued map (multimap) ${\mathcal A} : X \rightarrow P(Y)$ is said to be a multivalued
linear operator, MLO for short, iff the following holds:
\begin{itemize}
\item[(i)] $D({\mathcal A}) := \{x \in X : {\mathcal A}x \neq \emptyset\}$ is a linear subspace of $X$;
\item[(ii)] ${\mathcal A}x +{\mathcal A}y \subseteq {\mathcal A}(x + y),$ $x,\ y \in D({\mathcal A})$
and $\lambda {\mathcal A}x \subseteq {\mathcal A}(\lambda x),$ $\lambda \in {\mathbb C},$ $x \in D({\mathcal A}).$
\end{itemize}
In the case that $X=Y,$ then we say that ${\mathcal A}$ is an MLO in $X.$
It is well known that
for any $x,\ y\in D({\mathcal A})$ and $\lambda,\ \eta \in {\mathbb C}$ with $|\lambda| + |\eta| \neq 0,$ we
have $\lambda {\mathcal A}x + \eta {\mathcal A}y = {\mathcal A}(\lambda x + \eta y).$ If ${\mathcal A}$ is an MLO, then ${\mathcal A}0$ is a linear manifold in $Y$
and ${\mathcal A}x = f + {\mathcal A}0$ for any $x \in D({\mathcal A})$ and $f \in {\mathcal A}x.$ Define 
the kernel space $N({\mathcal A})$ of ${\mathcal A}$ and the range $R({\mathcal A})$ of ${\mathcal A}$ 
by $N({\mathcal A}):= \{x \in D({\mathcal A}) : 0 \in {\mathcal A}x\}$ and
$R({\mathcal A}):=\{{\mathcal A}x :  x\in D({\mathcal A})\},$ respectively. We write ${\mathcal A} \subseteq {\mathcal B}$ iff $D({\mathcal A}) \subseteq D({\mathcal B})$ and ${\mathcal A}x \subseteq {\mathcal B}x$
for all $x\in D({\mathcal A}).$

Sums, mutual products, taking powers and products with complex scalars are standard operations for multivalued
linear operators (see e.g. \cite{cross}, \cite{faviniyagi} and \cite{FKP}). It is said that an MLO operator  ${\mathcal A} : X\rightarrow P(Y)$ is closed iff for any
sequences $(x_{n})$ in $D({\mathcal A})$ and $(y_{n})$ in $Y$ such that $y_{n}\in {\mathcal A}x_{n}$ for all $n\in {\mathbb N}$ we have that the suppositions $\lim_{n \rightarrow \infty}x_{n}=x$ and
$\lim_{n \rightarrow \infty}y_{n}=y$ imply
$x\in D({\mathcal A})$ and $y\in {\mathcal A}x.$ 

Concerning the $C$-resolvent sets of MLOs in Banach spaces, our standing hyportheses will be that ${\mathcal A}$ is an MLO in $X$, as well as that $C\in L(X)$ is injective and $C{\mathcal A}\subseteq {\mathcal A}C.$
The $C$-resolvent set of ${\mathcal A},$ $\rho_{C}({\mathcal A})$ for short, is defined as the union of those complex numbers
$\lambda \in {\mathbb C}$ for which
\begin{itemize}
\item[(i)] $R(C)\subseteq R(\lambda-{\mathcal A})$;
\item[(ii)] $(\lambda - {\mathcal A})^{-1}C$ is a single-valued linear continuous operator on $X.$
\end{itemize}
The operator $\lambda \mapsto (\lambda -{\mathcal A})^{-1}C$ is called the $C$-resolvent of ${\mathcal A}$ ($\lambda \in \rho_{C}({\mathcal A})$); the resolvent set of ${\mathcal A}$ is then defined by $\rho({\mathcal A}):=\rho_{I}({\mathcal A}),$ where $I$ denotes the identity operator on $X.$ Set $R(\lambda : {\mathcal A})\equiv  (\lambda -{\mathcal A})^{-1}$  ($\lambda \in \rho({\mathcal A})$).
The basic properties of $C$-resolvent sets of single-valued linear operators continue to hold in our framework (cf. \cite{FKP} for more details).  For instance, $\rho({\mathcal A})$ is always an open subset of ${\mathbb C}$ and $\rho({\mathcal A})\neq \emptyset$ implies that ${\mathcal A}$ is closed.

In the sequel, we will employ the following important condition:

\begin{itemize}
\item[(P)]
There exist finite constants $c,\ M>0$ and $\beta \in (0,1]$ such that
$$
\Psi:=\Bigl\{ \lambda \in {\mathbb C} : \Re \lambda \geq -c\bigl( |\Im \lambda| +1 \bigr) \Bigr\} \subseteq \rho({\mathcal A})
$$
and
$$
\| R(\lambda : {\mathcal A})\| \leq M\bigl( 1+|\lambda|\bigr)^{-\beta},\quad \lambda \in \Psi.
$$
\end{itemize}

Then degenerate strongly continuous semigroup $(T(t))_{t> 0}\subseteq L(X)$ generated by ${\mathcal A}$ satisfies estimate
$\|T(t) \| \leq M e^{-ct}t^{\beta -1},$ $t> 0$ for some finite constant $M>0$. Furthermore, we know that $(T(t))_{t> 0}$ is given by the formula
$$
T(t)x=\frac{1}{2\pi i}\int_{\Gamma}e^{\lambda t}\bigl(  \lambda -{\mathcal A} \bigr)^{-1}x\, d\lambda,\quad t>0,\ x\in X,
$$
where $\Gamma$ is the upwards oriented curve $\lambda=-c(|\eta|+1)+i\eta$ ($\eta \in {\mathbb R}$).
Assume that $0<\gamma <1$ and $\nu >-\beta .$ Set
\begin{align*}
T_{\gamma,\nu}(t)x:=t^{\gamma \nu}\int^{\infty}_{0}s^{\nu}\Phi_{\gamma}( s)T\bigl( st^{\gamma}\bigr)x\, ds,\quad t>0,\ x\in X
\end{align*}
and following E. Bazhlekova \cite{bajlekova}, R.-N. Wang, D.-H. Chen, T.-J. Xiao \cite{fractionalsectorial}, 
$$
S_{\gamma}(t):=T_{\gamma,0}(t),\mbox{   }P_{\gamma}(t):=\gamma T_{\gamma,1}(t)/t^{\gamma},\quad t>0.
$$
Recall that $(S_{\gamma}(t))_{t>0}$ is a subordinated $(g_{\gamma},I)$-regularized resolvent family generated by ${\mathcal A},$ which is not necessarily strongly continuous at zero.
In \cite{FKP}, we have proved that there exists a finite constant $M_{1}>0$ such that
\begin{align}\label{debil}
\bigl\| S_{\gamma}(t) \bigr\|+\bigl\| P_{\gamma}(t) \bigr\|\leq M_{1}t^{\gamma (\beta-1)},\quad t>0.
\end{align}
Furthermore, in \cite{nova-mono}, we have proved that there exists a finite constant $M_{2}>0$ such that
\begin{align}\label{debil-prim}
\bigl\| S_{\gamma}(t) \bigr\|\leq M_{2}t^{-\gamma},\ t\geq 1 \ \ \mbox{  and  }\ \ \bigl\| P_{\gamma}(t) \bigr\|\leq M_{2}t^{-2\gamma },\ t\geq 1.
\end{align}
Set $R_{\gamma}(t):= t^{\gamma -1}P_{\gamma}(t),$ $t>0.$

\subsection{Lebesgue spaces with variable exponents
$L^{p(x)}$}\label{karambita}

Assume $\emptyset \neq \Omega \subseteq {\mathbb R}.$
By $M(\Omega  : X)$ we denote the collection of all measurable functions $f: \Omega \rightarrow X;$ $M(\Omega):=M(\Omega : {\mathbb R}).$ Furthermore, ${\mathcal P}(\Omega)$ denotes the vector space of all Lebesgue measurable functions $p : \Omega \rightarrow [1,\infty].$
For any $p\in {\mathcal P}(\Omega)$ and $f\in M(\Omega : X),$ set
$$
\varphi_{p(x)}(t):=\left\{
\begin{array}{l}
t^{p(x)},\ t\geq 0,\ 1\leq p(x)<\infty,\\
0,\ 0\leq t\leq 1,\ p(x)=\infty,\\
\infty,\ t>1,\ p(x)=\infty 
\end{array}
\right.
$$
and
$$
\rho(f):=\int_{\Omega}\varphi_{p(x)}(\|f(x)\|)\, dx .
$$
We define Lebesgue space 
$L^{p(x)}(\Omega : X)$ with variable exponent
as follows
$$
L^{p(x)}(\Omega : X):=\Bigl\{f\in M(\Omega : X): \lim_{\lambda \rightarrow 0+}\rho(\lambda f)=0\Bigr\}.
$$
Then 
\begin{align*}
L^{p(x)}(\Omega : X)=\Bigl\{f\in M(\Omega : X):  \mbox{ there exists }\lambda>0\mbox{ such that }\rho(\lambda f)<\infty\Bigr\};
\end{align*}
see \cite[p. 73]{variable}.
For every $u\in L^{p(x)}(\Omega : X),$ we introduce the Luxemburg norm of $u(\cdot)$ in the following manner:
$$
\|u\|_{p(x)}:=\|u\|_{L^{p(x)}(\Omega :X)}:=\inf\Bigl\{ \lambda>0 : \rho(f/\lambda)    \leq 1\Bigr\}.
$$
Equipped with the above norm, the space $
L^{p(x)}(\Omega : X)$ becomes a Banach one (see e.g. \cite[Theorem 3.2.7]{variable} for scalar-valued case), coinciding with the usual Lebesgue space $L^{p}(\Omega : X)$ in the case that $p(x)=p\geq 1$ is a constant function.
For any $p\in M(\Omega),$ we set 
$$
p^{-}:=\text{essinf}_{x\in \Omega}p(x) \ \ \mbox{ and } \ \ p^{+}:=\text{esssup}_{x\in \Omega}p(x).
$$
Define
$$
C_{+}(\Omega ):=\bigl\{ p\in M(\Omega): 1<p^{-}\leq p(x) \leq p^{+} <\infty \mbox{ for a.e. }x\in \Omega \bigr \}
$$
and
$$
D_{+}(\Omega ):=\bigl\{ p\in M(\Omega): 1 \leq p^{-}\leq p(x) \leq p^{+} <\infty \mbox{ for a.e. }x\in \Omega \bigr \}.
$$
For $p\in D_{+}([0,1]),$ the space $
L^{p(x)}(\Omega : X)$ behaves nicely, with almost all fundamental properties of space $
L^{p}(\Omega : X)$ being retained; in this case, we know that 
$$
L^{p(x)}(\Omega : X)=\Bigl\{f\in M(\Omega : X):  \mbox{ for all }\lambda>0\mbox{ we have }\rho(\lambda f)<\infty\Bigr\}.
$$
Furthermore, if $p\in 
C_{+}(\Omega ),$ then $
L^{p(x)}(\Omega : X)$ is uniformly convex and thus reflexive (\cite{fan-zhao}). 

We will use the following lemma (see e.g. \cite[Lemma 3.2.20, (3.2.22); Corollary 3.3.4; p. 77]{variable} for scalar-valued case):

\begin{lem}\label{aux}
\begin{itemize}
\item[(i)] Let $p,\ q,\ r \in {\mathcal P}(\Omega),$ and let
$$
\frac{1}{q(x)}=\frac{1}{p(x)}+\frac{1}{r(x)},\quad x\in \Omega .
$$
Then, for every $u\in L^{p(x)}(\Omega : X)$ and $v\in L^{r(x)}(\Omega),$ we have $uv\in L^{q(x)}(\Omega : X)$
and
\begin{align*}
\|uv\|_{q(x)}\leq 2 \|u\|_{p(x)}\|v\|_{r(x)}.
\end{align*}
\item[(ii)] Let $\Omega $ be of a finite Lebesgue's measure, let $p,\ q \in {\mathcal P}(\Omega),$ and let $q\leq p$ a.e. on $\Omega.$ Then
 $L^{p(x)}(\Omega : X)$ is continuously embedded in $L^{q(x)}(\Omega : X).$
\item[(iii)] Let $f\in L^{p(x)}(\Omega : X),$ $g\in M(\Omega : X)$ and $0\leq \|g\| \leq \|f\|$ a.e. on $\Omega .$ Then $g\in L^{p(x)}(\Omega : X)$ and $\|g\|_{p(x)}\leq \|f\|_{p(x)}.$
\end{itemize}
\end{lem}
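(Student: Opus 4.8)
The plan is to regard part (i) as the heart of the matter, to obtain part (ii) as a corollary of (i), and to dispose of (iii) directly from the monotonicity of the modular. Throughout, for an exponent $s\in {\mathcal P}(\Omega)$ I write $\rho_{s}$ for the modular attached to $s,$ namely $\rho_{s}(h):=\int_{\Omega}\varphi_{s(x)}(\|h(x)\|)\, dx,$ and I use repeatedly the elementary unit-ball fact that $\|w\|_{s(x)}\leq 1$ forces $\rho_{s}(w)\leq 1,$ together with the positive homogeneity of the Luxemburg norm, which lets me normalize.

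For (i), I first note that the hypothesis $1/q(x)=1/p(x)+1/r(x)$ gives $q(x)\leq p(x)$ and $q(x)\leq r(x)$ a.e. If $u$ or $v$ vanishes a.e. there is nothing to prove, so by homogeneity I would rescale and assume $\|u\|_{p(x)}=\|v\|_{r(x)}=1,$ whence $\rho_{p}(u)\leq 1$ and $\rho_{r}(v)\leq 1.$ The key pointwise step is the weighted Young (arithmetic/geometric mean) inequality applied with the weights $\theta(x):=q(x)/p(x)$ and $1-\theta(x)=q(x)/r(x),$ which yields, at a.e.\ $x$ with $p(x),r(x)<\infty,$
\begin{align*}
\bigl(\|u(x)\|\,|v(x)|\bigr)^{q(x)}\leq \frac{q(x)}{p(x)}\|u(x)\|^{p(x)}+\frac{q(x)}{r(x)}|v(x)|^{r(x)};
\end{align*}
the degenerate cases $p(x)=\infty$ or $r(x)=\infty$ are handled separately from the definition of $\varphi_{s(x)}.$ Since $q(x)/p(x)\leq 1$ and $q(x)/r(x)\leq 1,$ integrating over $\Omega$ gives $\rho_{q}(uv)\leq \rho_{p}(u)+\rho_{r}(v)\leq 2.$ Finally, because $\varphi_{q(x)}(t/2)\leq \tfrac12\varphi_{q(x)}(t)$ when $q(x)\geq 1,$ one obtains $\rho_{q}(uv/2)\leq 1,$ i.e.\ $\|uv\|_{q(x)}\leq 2;$ undoing the normalization produces $\|uv\|_{q(x)}\leq 2\|u\|_{p(x)}\|v\|_{r(x)},$ which is the asserted bound.

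For (ii), I would simply introduce the defect exponent $s\in {\mathcal P}(\Omega)$ by $1/s(x):=1/q(x)-1/p(x)\geq 0,$ which is admissible because $q\leq p$ a.e.\ forces $1\leq s(x)\leq \infty,$ and then observe that the constant function $\mathbf 1$ lies in $L^{s(x)}(\Omega)$ precisely because $\Omega$ has finite measure: for $\lambda\geq \max(1,|\Omega|)$ one has $\rho_{s}(\mathbf 1/\lambda)\leq |\Omega|/\lambda\leq 1.$ Writing $f=f\cdot\mathbf 1$ and applying (i) with the triple $(p,q,s)$ then yields $\|f\|_{q(x)}\leq 2\,\|\mathbf 1\|_{s(x)}\,\|f\|_{p(x)},$ which is exactly the claimed continuous embedding.

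Part (iii) is immediate: since $t\mapsto \varphi_{p(x)}(t)$ is nondecreasing, the pointwise bound $0\leq \|g(x)\|\leq \|f(x)\|$ gives $\varphi_{p(x)}(\|g(x)\|/\lambda)\leq \varphi_{p(x)}(\|f(x)\|/\lambda)$ for every $\lambda>0,$ hence $\rho_{p}(g/\lambda)\leq \rho_{p}(f/\lambda).$ Thus every $\lambda$ admissible in the Luxemburg infimum for $f$ is admissible for $g,$ giving $g\in L^{p(x)}(\Omega:X)$ and $\|g\|_{p(x)}\leq \|f\|_{p(x)}.$ The only genuine obstacle I anticipate is the bookkeeping in (i) at points where $p(x)$ or $r(x)$ equals $\infty,$ where the Young inequality must be replaced by the corresponding essential-supremum estimate; by contrast, the passage from the scalar statement of the cited reference to the $X$-valued case costs nothing, since the modular depends on a function only through its norm and $\|u(x)v(x)\|=|v(x)|\,\|u(x)\|$ because $v$ is scalar-valued.
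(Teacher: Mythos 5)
The paper does not prove Lemma \ref{aux} at all; it simply quotes the scalar-valued statement from \cite{variable}, so there is no in-paper argument to compare against. Your proof is exactly the standard one from that source --- the pointwise weighted Young inequality plus the unit-ball property of the Luxemburg norm and convexity of $\varphi_{q(x)}$ for (i), multiplication by the constant function $\mathbf 1\in L^{s(x)}(\Omega)$ with $1/s=1/q-1/p$ for (ii), and monotonicity of the modular for (iii) --- correctly transplanted to the $X$-valued setting via the observation that the modular sees only $\|u(x)\|$; it is complete and correct, including your deferral of the $p(x)=\infty$ and $r(x)=\infty$ cases to the essential-supremum bound forced by $\rho(u)\leq 1$.
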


For more details about Lebesgue spaces with variable exponents
$L^{p(x)},$ the reader may consult \cite{m-zitane}, \cite{m-zitane-prim}, \cite{variable}, \cite{fan-zhao} and \cite{doktor}.

\section{Stepanov generalizations of almost periodic and asymptotically almost periodic functions}\label{section2}

The concept of almost periodicity was introduced by Danish mathematician H. Bohr around 1924-1926 and later generalized by many other authors (cf.  \cite{diagana},  \cite{gaston}, \cite{nova-mono} and references cited therein for more details on the subject).
Let $I={\mathbb R}$ or $I=[0,\infty),$ and let $f : I \rightarrow X$ be continuous. Given $\epsilon>0,$ we call $\tau>0$ an $\epsilon$-period for $f(\cdot)$ iff\index{$\epsilon$-period}
$
\| f(t+\tau)-f(t) \| \leq \epsilon,$ $ t\in I.
$
The set consisting of all $\epsilon$-periods for $f(\cdot)$ is denoted by $\vartheta(f,\epsilon).$ We say that $f(\cdot)$ is almost periodic, a.p. for short, iff for each $\epsilon>0$ the set $\vartheta(f,\epsilon)$ is relatively dense in $I,$ which means that
there exists $l>0$ such that any subinterval of $I$ of length $l$ meets $\vartheta(f,\epsilon)$. The vector space consisting of all almost periodic functions will be denoted by $AP(I :X).$ A function $f \in C_{b}([0,\infty) : X)$ is said to be asymptotically almost periodic  (M. Fr\' echet, 1941) iff
for every $\epsilon >0$ we can find numbers $ l > 0$ and $M >0$ such that every subinterval of $[0,\infty)$ of
length $l$ contains, at least, one number $\tau$ such that $\|f(t+\tau)-f(t)\| \leq \epsilon$ for all $t \geq M.$
It is well known that a function $f \in C_{b}([0,\infty):X)$ is asymptotically almost periodic iff
there exist functions $g \in AP([0,\infty) :X)$ and $\phi \in  C_{0}([0,\infty): X)$
such that $f = g+\phi.$
The vector space consisting of all asymptotically almost periodic functions will be denoted by $AAP([0,\infty) :X).$

Let $1\leq p <\infty,$ let $l>0,$ and let $f,\ g\in L^{p}_{loc}(I :X),$ where $I={\mathbb R}$ or $I=[0,\infty).$ We define the Stepanov `metric' by\index{Stepanov metric}
\begin{align*}
D_{S_{l}}^{p}\bigl[f(\cdot),g(\cdot)\bigr]:= \sup_{x\in I}\Biggl[ \frac{1}{l}\int_{x}^{x+l}\bigl \| f(t) -g(t)\bigr\|^{p}\, dt\Biggr]^{1/p}.
\end{align*}
Then it is well known that, for every two numbers $l_{1},\ l_{2}>0,$ there exist two positive real constants $k_{1},\ k_{2}>0$ independent of $f,\ g,$ such that
\begin{align*}
k_{1}D_{S_{l_{1}}}^{p}\bigl[f(\cdot),g(\cdot)\bigr]\leq D_{S_{l_{2}}}^{p}\bigl[f(\cdot),g(\cdot)\bigr]\leq k_{2}D_{S_{l_{1}}}^{p}\bigl[f(\cdot),g(\cdot)\bigr].
\end{align*}
The Stepanov norm of $f(\cdot)$ is introduced by\index{Stepanov norm} \index{Stepanov distance} \index{Weyl!distance} \index{Weyl!norm}
$
\| f  \|_{S_{l}^{p}}:= D_{S_{l}}^{p}[f(\cdot),0].
$

In the sequel, we assume that $l_{1}=l_{2}=1.$ It is said that a function $f\in L^{p}_{loc}(I :X)$ is Stepanov $p$-bounded, $S^{p}$-bounded shortly, iff\index{function!Stepanov bounded}
$$
\|f\|_{S^{p}}:=\sup_{t\in I}\Biggl( \int^{t+1}_{t}\|f(s)\|^{p}\, ds\Biggr)^{1/p}=\sup_{t\in I}\Biggl( \int^{1}_{0}\|f(s+t)\|^{p}\, ds\Biggr)^{1/p}<\infty.
$$
Furnished with the above norm, the space $L_{S}^{p}(I:X)$ consisted of all $S^{p}$-bounded functions is one of Banach's.
A function $f\in L_{S}^{p}(I:X)$ is said to beStepanov $p$-almost periodic, $S^{p}$-almost periodic shortly, iff the function
$
\hat{f} : I \rightarrow L^{p}([0,1] :X),
$ defined by
$$
\hat{f}(t)(s):=f(t+s),\quad t\in I,\ s\in [0,1]
$$
is almost periodic. Similarly, a function $f\in L_{S}^{p}([0,\infty):X)$ is said to be asymptotically Stepanov $p$-almost periodic, asymptotically $S^{p}$-almost periodic shortly, iff the function
$
\hat{f}(\cdot)
$ is asymptotically almost periodic.
It is well known that the space of Stepanov almost periodic functions, resp. asymptotically Stepanov almost periodic functions, denoted by $APS^{p}(I:X),$ resp. $AAPS^{p}([0,\infty):X),$ is a closed linear subspace of $L_{S}^{p}(I:X),$ resp. $L_{S}^{p}([0,\infty):X),$ and a Banach space therefore. The symbol $S^{p}_{0}([0,\infty):X)$
stands for the vector space consisting of all functions
$q\in L_{loc}^{p}([0,\infty ) : X)$ such that $\hat{q}\in C_{0}([0,\infty) : L^{p}([0,1]:X)).$

If $1\leq p<q<\infty$ and $f(\cdot)$ is (asymptotically) Stepanov $q$-almost periodic, then $f(\cdot)$ is (asymptotically) Stepanov $p$-almost periodic. Therefore, the (asymptotical) Stepanov $p$-almost periodicity of $f(\cdot)$ for some $p\in [1,\infty)$ implies the (asymptotical) Stepanov $p$-almost  periodicity of $f(\cdot).$
It is a well-known fact that if $f(\cdot)$ is an almost periodic (a.a.p.) function
then $f(\cdot)$ is also $S^p$-almost periodic (asymptotically $S^p$-a.p.) for $1\leq p <\infty.$ The converse statement is false, in general.

\section{Generalized almost periodic and generalized asymptotically almost periodic functions in  Lebesgue spaces with variable exponents
$L^{p(x)}$}\label{section3}

The following notion of Stepanov $p(x)$-boundedness is essentially different from that one introduced by T. Diagana and M. Zitane in \cite[Definition 3.10]{m-zitane} and \cite[Definition 4.5]{m-zitane-prim}, where the authors have used the condition $p\in C_{+}({\mathbb R}):$

\begin{defn}\label{daniel-toka}
Let $p\in {\mathcal P}([0,1]),$ and let $I={\mathbb R}$ or $I=[0,\infty).$ Then it is said that a function $f\in M(I : X)$ is Stepanov $p(x)$-bounded, $S^{p(x)}$-bounded in short, iff $f(\cdot +t) \in L^{p(x)}([0,1]: X)$ for all $t\in I,$ and $\sup_{t\in I} \|f(\cdot +t)\|_{p(x)}<\infty $ i.e.,
$$
\|f\|_{S^{p(x)}}:=\sup_{t\in I}\inf\Biggl\{ \lambda>0 : \int_{0}^{1}\varphi_{p(x)}\Biggl( \frac{\|f(x +t)\|}{\lambda}\Biggr)\, dx \leq 1\Biggr\}<\infty.
$$
By $L_{S}^{p(x)}(I:X)$ we denote the vector space consisting of all such functions.
\end{defn}

Immediately from definition, it follows that the space $L_{S}^{p(x)}(I:X)$ is translation invariant in the sense that, for every $f\in L_{S}^{p(x)}(I:X)$ and $\tau \in I,$ we have $f(\cdot+\tau) \in L_{S}^{p(x)}(I:X).$ This is not the case with the notion introduced in \cite{m-zitane}-\cite{m-zitane-prim}, since there the space $L_{S}^{p(x)}(I:X)$ may or may not be translation invariant depending on $p(x);$ furthermore, we would like to note that the notion introduced in these papers is meaningful even in the case that $p\in {\mathcal P}({\mathbb R}).$

We introduce the concept of (asymptotical) $S^{p(x)}$-almost periodicity as follows:

\begin{defn}\label{sasasa}
\begin{itemize}
\item[(i)]
Let $p\in {\mathcal P}([0,1]),$ and let $I={\mathbb R}$ or $I=[0,\infty).$ Then it is said that a function $f\in L_{S}^{p(x)}(I:X)$ is Stepanov $p(x)$-almost periodic, Stepanov $p(x)$-a.p. in short, iff the function $\hat{f} : I \rightarrow L^{p(x)}([0,1]: X)$ is almost periodic. By $APS^{p(x)}(I : X)$ we denote the vector space consisting of all such functions.
\item[(ii)] Let $p\in {\mathcal P}([0,1]),$ and let $I=[0,\infty).$ Then it is said that a function $f\in L_{S}^{p(x)}(I:X)$ is asymptotically Stepanov $p(x)$-almost periodic,  asymptotically Stepanov $p(x)$-a.p. in short, iff the function $\hat{f} : I \rightarrow L^{p(x)}([0,1]: X)$ is  asymptotically almost periodic. By $AAPS^{p(x)}(I : X)$ we denote the vector space consisting of all such functions; the abbreviation $S^{p(x)}_{0}([0,\infty):X)$
will be used to denote the set of all functions
$q\in  L_{S}^{p(x)}([0,\infty): X)$ such that $\hat{q}\in C_{0}([0,\infty) : L^{p(x)}([0,1]:X)).$
\end{itemize}
\end{defn}

As in the case of Stepanov $p(x)$-boundedness, the space $APS^{p(x)}(I : X)$ is translation invariant in the sense that, for every $f\in APS^{p(x)}(I : X)$ and $\tau \in I,$ we have $f(\cdot+\tau) \in APS^{p(x)}(I : X).$ A similar statement holds for the space $AAPS^{p(x)}([0,\infty) : X).$

It can be simply checked that the notions of (asymptotical) Stepanov $p(x)$-boundedness and (asymptotical) Stepanov $p(x)$-almost periodicity are equivalent with those ones introduced in the previous section, provided that $p(x)\equiv p\geq 1$ is a constant function.

Furnished with the norm $\|\cdot\|_{S^{p(x)}}$, the space $L_{S}^{p(x)}(I:X)$ consisted of all $S^{p}$-bounded functions is a Banach one and this space is continuously embedded in $L_{S}^{1}(I:X),$ for any $p\in {\mathcal P}([0,1]).$ Furthermore, it can be simply proved that 
$APS^{p(x)}(I : X)$ ($AAPS^{p(x)}(I : X)$ with $I=[0,\infty)$) is a closed subspace of $L_{S}^{p(x)}(I:X)$ and therefore Banach space itself, for any $p\in {\mathcal P}([0,1])$.

If $p\in {\mathcal P}([0,1]),$ then Lemma \ref{aux}(ii) implies that $L^{p(x)}([0,1] : X) \hookrightarrow L^{1}([0,1] : X),$ where $\hookrightarrow$
denotes a continuous embedding, so that $L^{p(x)}_{S}(I : X)\hookrightarrow L^{1}_{S}(I : X),$ as well. Therefore,
the following holds:

\begin{prop}\label{potapanje}
Assume that $p\in {\mathcal P}([0,1])$. Then we have the following:
\begin{itemize}
\item[(i)] $L^{p(x)}_{S}(I : X)\hookrightarrow L^{1}_{S}(I : X).$ 
\item[(ii)] $APS^{p(x)}(I : X) \hookrightarrow APS^{1}(I : X)$ and  $AAPS^{p(x)}([0,\infty) : X) \hookrightarrow AAPS^{1}([0,\infty) : X).$
\end{itemize}
\end{prop}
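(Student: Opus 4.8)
The plan is to derive both parts from a single pointwise embedding of the fibre spaces together with the elementary fact that bounded linear operators preserve (asymptotic) almost periodicity. First I would record the fibre-level embedding. Since $p\in {\mathcal P}([0,1])$ forces $1\le p(x)$ a.e.\ on the finite-measure set $[0,1]$, taking $q\equiv 1$ in Lemma~\ref{aux}(ii) furnishes a finite constant $C>0$, depending only on $p$, with
\begin{align*}
\|u\|_{L^{1}([0,1]:X)}\le C\,\|u\|_{p(x)},\qquad u\in L^{p(x)}([0,1]:X).
\end{align*}
The essential feature is that $C$ is independent of $u$, hence of any translate. For part (i) I would then apply this to $u=f(\cdot+t)=\hat{f}(t)$ for each $t\in I$ and take the supremum over $t$, obtaining $\|f\|_{S^{1}}\le C\,\|f\|_{S^{p(x)}}$, which is precisely the continuous embedding $L^{p(x)}_{S}(I:X)\hookrightarrow L^{1}_{S}(I:X)$.

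For part (ii), let $\iota:L^{p(x)}([0,1]:X)\to L^{1}([0,1]:X)$ be the inclusion from (i), so $\iota\in L(L^{p(x)}([0,1]:X),L^{1}([0,1]:X))$ with $\|\iota\|\le C$. Writing $\hat{f}_{p(x)}$ and $\hat{f}_{1}$ for $\hat f$ viewed as $L^{p(x)}$- and $L^{1}$-valued respectively, the key identity is $\hat{f}_{1}=\iota\circ\hat{f}_{p(x)}$, since both assign to $t$ the class of $s\mapsto f(t+s)$. I would then invoke the standard fact that a bounded linear operator $T\in L(Y,Z)$ maps almost periodic functions to almost periodic functions: $T\circ g$ is continuous, and every $\epsilon$-period of $g$ is a $\|T\|\epsilon$-period of $T\circ g$, so relative density of periods transfers. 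Applying this with $g=\hat{f}_{p(x)}$ (almost periodic as $f\in APS^{p(x)}(I:X)$) and $T=\iota$ shows $\hat{f}_{1}$ is almost periodic, i.e.\ $f\in APS^{1}(I:X)$; continuity of the embedding is the inequality from (i).

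For the asymptotic statement I would use the Fr\'echet decomposition recalled in Section~\ref{section2}, writing $\hat{f}_{p(x)}=g+\phi$ with $g\in AP([0,\infty):L^{p(x)}([0,1]:X))$ and $\phi\in C_{0}([0,\infty):L^{p(x)}([0,1]:X))$. Composing with $\iota$, the term $\iota\circ g$ is almost periodic by the argument above, while $\iota\circ\phi\in C_{0}([0,\infty):L^{1}([0,1]:X))$ because $\|\iota(\phi(t))\|_{L^{1}}\le C\|\phi(t)\|_{p(x)}\to 0$ as $t\to\infty$. Hence $\hat{f}_{1}=\iota\circ g+\iota\circ\phi$ exhibits $\hat{f}_{1}$ as asymptotically almost periodic, giving $f\in AAPS^{1}([0,\infty):X)$, with continuity again inherited from (i). The argument is essentially formal once $\iota$ is recognised as bounded and linear; the only point demanding any care is the uniformity in $t$ of the constant $C$ in part (i), which holds because the Lemma~\ref{aux}(ii) embedding constant depends only on $p$ and the base interval $[0,1]$, not on the particular element being estimated.
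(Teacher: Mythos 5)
Your proposal is correct and follows essentially the same route as the paper: the paper derives the proposition from the fibre-level embedding $L^{p(x)}([0,1]:X)\hookrightarrow L^{1}([0,1]:X)$ supplied by Lemma~\ref{aux}(ii), transfers it to the Stepanov norms by taking the supremum over translates, and (implicitly) uses that composing $\hat f$ with this bounded inclusion preserves (asymptotic) almost periodicity. Your write-up merely makes explicit the uniformity of the embedding constant and the Fr\'echet decomposition step, both of which the paper leaves tacit.
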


We can similarly prove the following proposition: 

\begin{prop}\label{d+}
Assume that $p\in D_{+}([0,1])$ and $1 \leq p^{-}\leq p(x) \leq p^{+} <\infty$ for a.e. $x\in [0,1] $. Then we have the following: 
\begin{itemize}
\item[(i)] $L^{p^{+}}_{S}(I : X)\hookrightarrow  L^{p(x)}_{S}(I : X) \hookrightarrow L^{p^{-}}_{S}(I : X).$ 
\item[(ii)] $APS^{p^{+}}(I : X)\hookrightarrow  APS^{p(x)}(I : X) \hookrightarrow APS^{p^{-}}(I : X)$ and $AAPS^{p^{+}}([0,\infty) : X)\hookrightarrow  AAPS^{p(x)}([0,\infty) : X) \hookrightarrow AAPS^{p^{-}}([0,\infty) : X).$
\end{itemize}
\end{prop}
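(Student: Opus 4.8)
The plan is to deduce everything from Lemma \ref{aux}(ii), which provides continuous embeddings between variable-exponent Lebesgue spaces over a finite-measure domain. Since $[0,1]$ has finite Lebesgue measure and $p^{-}\leq p(x)\leq p^{+}$ a.e.\ on $[0,1]$, applying the lemma first to the pair of exponents $(p^{-},p(x))$ and then to $(p(x),p^{+})$ yields the chain
\begin{align*}
L^{p^{+}}([0,1]:X)\hookrightarrow L^{p(x)}([0,1]:X)\hookrightarrow L^{p^{-}}([0,1]:X),
\end{align*}
where $p^{+}$ and $p^{-}$ are read as constant exponents. Explicitly, there are finite constants $C_{1},C_{2}>0$ with $\|g\|_{p(x)}\leq C_{1}\|g\|_{p^{+}}$ for every $g\in L^{p^{+}}([0,1]:X)$ and $\|g\|_{p^{-}}\leq C_{2}\|g\|_{p(x)}$ for every $g\in L^{p(x)}([0,1]:X)$.

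For part (i), I would fix $f$ in the appropriate Stepanov space and apply the two inequalities above to the translate $g=f(\cdot+t)|_{[0,1]}$, for each $t\in I$. Taking the supremum over $t\in I$ converts these local estimates into Stepanov estimates, giving $\|f\|_{S^{p(x)}}\leq C_{1}\|f\|_{S^{p^{+}}}$ and $\|f\|_{S^{p^{-}}}\leq C_{2}\|f\|_{S^{p(x)}}$; in particular, membership in the larger Stepanov class is inherited, which is precisely the asserted chain of embeddings.

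For part (ii), the key observation is that the embedding $J:L^{p(x)}([0,1]:X)\to L^{p^{-}}([0,1]:X)$ (and likewise $L^{p^{+}}\hookrightarrow L^{p(x)}$) is a bounded linear operator, and composition with a bounded linear operator sends almost periodic functions to almost periodic functions: if $g(\cdot)$ is almost periodic with values in a Banach space $Y$ and $T\in L(Y,Z)$, then every $(\eps/\|T\|)$-period of $g(\cdot)$ is an $\eps$-period of $Tg(\cdot)$, so the $\eps$-periods of $Tg(\cdot)$ remain relatively dense. Thus, given $f\in APS^{p(x)}(I:X)$, the map $\hat f:I\to L^{p(x)}([0,1]:X)$ is almost periodic, and post-composing with $J$ shows that $\hat f$ is almost periodic as a map into $L^{p^{-}}([0,1]:X)$; hence $f\in APS^{p^{-}}(I:X)$. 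The reverse inclusion $APS^{p^{+}}\hookrightarrow APS^{p(x)}$ is handled identically using the other embedding. For the asymptotic versions I would use the decomposition $\hat f=g+\phi$ with $g$ almost periodic and $\phi\in C_{0}([0,\infty):L^{p(x)}([0,1]:X))$; applying $J$ gives $J\hat f=Jg+J\phi$, where $Jg$ is almost periodic by the argument just given and $J\phi\in C_{0}$ because $\|J\phi(t)\|_{p^{-}}\leq \|J\|\,\|\phi(t)\|_{p(x)}\to 0$.

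I do not expect a serious obstacle here, since the statement is essentially a transfer of the finite-measure embedding of Lemma \ref{aux}(ii) through the translation-supremum that defines the Stepanov norm. The only point requiring a little care is the preservation of (asymptotic) almost periodicity under the embedding; but since that embedding is a genuine bounded linear operator between the relevant $L^{p(x)}([0,1]:X)$-type spaces, this reduces to the standard stability of the Bohr/Fr\'echet definitions under bounded linear maps, as indicated above.
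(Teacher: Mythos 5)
Your proposal is correct and follows essentially the same route as the paper: the paper proves this proposition ``similarly'' to Proposition \ref{potapanje}, i.e., by applying Lemma \ref{aux}(ii) on the finite-measure interval $[0,1]$ to the exponent pairs $(p^{-},p(x))$ and $(p(x),p^{+})$, transferring the resulting continuous embeddings to the Stepanov norms via the supremum over translates, and using that the bounded embedding preserves (asymptotic) almost periodicity of $\hat f$. No gaps.
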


Now we will prove that any almost periodic function is $S^{p(x)}$-almost periodic, for any $p\in {\mathcal P}([0,1]):$

\begin{prop}\label{propa}
Let $p\in {\mathcal P}([0,1]),$ and let $f : I \rightarrow X$ be almost periodic. Then $f(\cdot)$ is $S^{p(x)}$-almost periodic.
\end{prop}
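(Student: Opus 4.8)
The plan is to unwind Definition \ref{sasasa}(i): I must check (a) that $f\in L_{S}^{p(x)}(I:X)$, i.e. $f$ is $S^{p(x)}$-bounded, and (b) that the associated function $\hat f : I \to L^{p(x)}([0,1]:X)$ is almost periodic. The entire argument will rest on a single elementary estimate for the Luxemburg norm on the unit interval: for any constant $c\geq 0$, the function on $[0,1]$ identically equal to $c$ has $L^{p(x)}$-norm at most $c$. This follows by testing $\lambda=c$ in the definition of $\|\cdot\|_{p(x)}$, since $\int_{0}^{1}\varphi_{p(x)}(c/c)\,dx=\int_{0}^{1}\varphi_{p(x)}(1)\,dx\leq 1$, using that $\varphi_{p(x)}(1)=1$ when $p(x)<\infty$ and $\varphi_{p(x)}(1)=0$ when $p(x)=\infty$. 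Combined with the monotonicity statement Lemma \ref{aux}(iii), this says exactly that if $\|g\|\leq c$ a.e.\ on $[0,1]$ then $\|g\|_{p(x)}\leq c$; in short, the $L^{p(x)}([0,1]:X)$-norm is dominated by the sup-norm. This domination is what lets me reduce everything to the classical (sup-norm) theory of almost periodicity.

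With this in hand the three remaining steps are routine. First, since $f$ is almost periodic it is bounded and uniformly continuous; setting $M:=\sup_{t\in I}\|f(t)\|$, the domination estimate gives $\|f(\cdot+t)\|_{p(x)}\leq M$ for every $t\in I$, so $\|f\|_{S^{p(x)}}\leq M$ and $f\in L_{S}^{p(x)}(I:X)$. Second, for continuity of $\hat f$ I would write $\|\hat f(t)-\hat f(t')\|_{p(x)}=\|f(t+\cdot)-f(t'+\cdot)\|_{p(x)}\leq \sup_{s\in[0,1]}\|f(t+s)-f(t'+s)\|$, again by the domination, and the right-hand side tends to $0$ as $t'\to t$ by uniform continuity of $f$. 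Third, for almost periodicity I transfer the periods: fixing $\epsilon>0$ and $\tau\in\vartheta(f,\epsilon)$, one has $\|f(t+\tau+s)-f(t+s)\|\leq\epsilon$ for all $s\in[0,1]$ and all $t$, so the domination yields $\|\hat f(t+\tau)-\hat f(t)\|_{p(x)}\leq\epsilon$ for every $t\in I$; taking the supremum over $t$ gives $\vartheta(f,\epsilon)\subseteq\vartheta(\hat f,\epsilon)$. Since $\vartheta(f,\epsilon)$ is relatively dense in $I$, so is $\vartheta(\hat f,\epsilon)$, and therefore $\hat f$ is almost periodic.

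I do not expect a genuine obstacle here; the only slightly delicate point is the constant-function estimate, where one must handle the piecewise definition of $\varphi_{p(x)}$ (in particular the region $p(x)=\infty$) uniformly in $x\in[0,1]$, so as to obtain the bound $\int_{0}^{1}\varphi_{p(x)}(1)\,dx\leq 1$ with no dependence on the exponent function. Once this is secured, the passage from ordinary almost periodicity to $S^{p(x)}$-almost periodicity is essentially verbatim the same as in the classical constant-exponent case, with Lemma \ref{aux}(iii) playing the role of the monotonicity of the $L^{p}$-norm.
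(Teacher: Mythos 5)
Your proposal is correct and follows essentially the same route as the paper: both arguments rest on the single observation that $\varphi_{p(x)}(u)\leq 1$ whenever $u\leq 1$ (treating the $p(x)=\infty$ case separately), so that the Luxemburg norm on $[0,1]$ is dominated by the sup-norm, and both then transfer the $\epsilon$-periods of $f$ verbatim to $\hat f$. The only cosmetic difference is that you package the key estimate as a constant-function bound combined with Lemma \ref{aux}(iii), whereas the paper verifies the inclusion $[\|f\|_{\infty},\infty)\subseteq\{\lambda>0:\int_{0}^{1}\varphi_{p(x)}(\|f(x+t)\|/\lambda)\,dx\leq 1\}$ (and its $\epsilon$-period analogue) directly.
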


\begin{proof}
To prove that $f(\cdot)$ is $S^{p(x)}$-bounded and $\|f\|_{L_{S}^{p(x)}}\leq \|f\|_{\infty},$ it suffices to show that, for every $t\in {\mathbb R},$ we have:
\begin{align}\label{spbounded}
\bigl[ \|f\|_{\infty},\infty \bigr) \subseteq \Biggl\{ \lambda>0 : \int_{0}^{1}\varphi_{p(x)}\Biggl( \frac{\|f(x +t)\|}{\lambda}\Biggr)\, dx \leq 1\Biggr\}.
\end{align}
For $\lambda \geq \|f\|_{\infty}, $ we have $\|f(x +t)\|/\lambda \leq 1,$ $t\in I.$ It can be simply perceived that, in this case,
$$
\varphi_{p(x)}\Biggl( \frac{\|f(x +t)\|}{\lambda}\Biggr)\leq 1,\quad t\in I,
$$
so that the integrand does not exceed $1;$
as a matter of fact,
by definition of $\varphi_{p(x)}(\cdot)$, we only need to observe that, for every $x\in [0,1]$ with $p(x)<\infty ,$ we have $(\| f(t+x)\|/\lambda)^{p(x)}\leq 1^{p(x)}=1,$ $t\in I.$ Hence, \eqref{spbounded} holds.
Using the uniform continuity of $f(\cdot)$ and a similar argumentation, we can show that the function $\hat{f} :I \rightarrow L^{p(x)}([0,1]:X)$ is uniform continuous. 
For direct proof of almost periodicity of function $\hat{f} :I \rightarrow L^{p(x)}([0,1]:X),$ we can argue as follows. For $\epsilon>0$ given as above,
there is a finite number $l>0$
such that any subinterval $I'$ of $I$ of length $l$ contains a number
$\tau \in I'$ such that
$\| f(t+\tau)-f(t) \|\leq \epsilon, $ $ t\in I.
$ It suffices to observe that, for this $\epsilon>0,$ we can choose the same length $l>0$ and the same $\epsilon$-almost period $\tau$ from $I'$ ensuring the validity of inequality $\| \hat{f}(t+\tau +\cdot)-\hat{f}(t+\cdot) \|_{ L^{p(x)}([0,1]:X)}\leq \epsilon, $ $ t\in I:
$ in order to see that the last inequality holds true, we only need to prove that, for every $t\in I,$ we have
$$
[\epsilon,\infty) \subseteq \Biggl\{\lambda>0 :  \int^{1}_{0}\varphi_{p(x)}\Biggl(  \frac{\| f(t+\tau+x)-f(t+x)\|}{\lambda} \Biggr)\, dx \leq 1\Biggl\}.
$$ 
Indeed, if $\lambda \geq \epsilon,$ then $\| f(t+\tau+x)-f(t+x)\|/\lambda\leq 1,$ $t\in I$ and the integrand cannot exceed $1:$ this simply follows from definition of $\varphi_{p(x)}(\cdot)$ and observation that, for every $x\in [0,1]$ with $p(x)<\infty ,$ we have $(\| f(t+\tau+x)-f(t+x)\|/\lambda)^{p(x)}\leq 1^{p(x)}=1,$ $t\in I.$ The proof of the proposition is thereby complete.
\end{proof}

We can similarly prove the following proposition:

\begin{prop}\label{propa-asym}
Let $p\in {\mathcal P}([0,1]),$ and let $f : [0,\infty) \rightarrow X$ be asymptotically almost periodic. Then $f(\cdot)$ is asymptotically $S^{p(x)}$-almost periodic.
\end{prop}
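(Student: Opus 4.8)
The plan is to reduce Proposition \ref{propa-asym} to the already-established Proposition \ref{propa} by exploiting the structural characterization of asymptotic almost periodicity. Since $f : [0,\infty) \rightarrow X$ is asymptotically almost periodic, I would first invoke the decomposition recalled in Section \ref{section2}: there exist $g \in AP([0,\infty) : X)$ and $\phi \in C_{0}([0,\infty) : X)$ such that $f = g + \phi$. The strategy is then to show that this decomposition is inherited at the Stepanov level, namely that $\hat{f} = \hat{g} + \hat{\phi}$ with $\hat{g} : [0,\infty) \rightarrow L^{p(x)}([0,1]:X)$ almost periodic and $\hat{\phi} \in C_{0}([0,\infty) : L^{p(x)}([0,1]:X))$. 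Once both pieces are in hand, $\hat{f}$ is the sum of an almost periodic function and a function vanishing at infinity, hence asymptotically almost periodic, which is exactly the assertion that $f$ is asymptotically $S^{p(x)}$-almost periodic.

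For the first summand, I would apply Proposition \ref{propa} directly to $g$: since $g$ is almost periodic on $[0,\infty)$, that proposition already guarantees $g$ is $S^{p(x)}$-almost periodic, i.e. $\hat{g} : [0,\infty) \rightarrow L^{p(x)}([0,1]:X)$ is almost periodic. This requires no new work. The substance of the proof therefore concentrates on the second summand. I would need to verify that $\hat{\phi} \in C_{0}([0,\infty) : L^{p(x)}([0,1]:X))$, that is, $\|\hat{\phi}(t)\|_{L^{p(x)}([0,1]:X)} \to 0$ as $t \to \infty$. The natural approach mirrors the boundedness argument in the proof of Proposition \ref{propa}: for $\phi \in C_{0}$, given $\eta > 0$ there is $M > 0$ with $\|\phi(s)\| \leq \eta$ for all $s \geq M$; then for $t \geq M$ one shows $[\eta,\infty) \subseteq \{\lambda > 0 : \int_0^1 \varphi_{p(x)}(\|\phi(x+t)\|/\lambda)\, dx \leq 1\}$ by the same observation that $(\|\phi(x+t)\|/\lambda)^{p(x)} \leq 1$ whenever $\|\phi(x+t)\|/\lambda \leq 1$, so that $\|\hat{\phi}(t)\|_{L^{p(x)}([0,1]:X)} \leq \eta$ for $t \geq M$. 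This yields the decay and, together with the continuity of $\hat{\phi}$ inherited from the uniform-continuity-style argument already used for $\hat{f}$ in Proposition \ref{propa}, gives $\hat{\phi} \in C_{0}$.

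The main obstacle I anticipate is handling the case $p(x) = \infty$ on a set of positive measure, where $\varphi_{p(x)}$ is defined piecewise and the clean inequality $(\cdot)^{p(x)} \leq 1$ is unavailable. On that set one instead uses that $\varphi_{p(x)}(u) = 0$ for $0 \leq u \leq 1$, so the contribution to the integral is still controlled once $\|\phi(x+t)\|/\lambda \leq 1$; this is precisely the device already employed in Proposition \ref{propa}, and I would simply repeat it verbatim, as the author signals with the phrase ``we can similarly prove.'' A secondary technical point worth checking is that $\phi \in C_0([0,\infty):X)$ genuinely lies in $L_S^{p(x)}([0,\infty):X)$, which follows because $\phi$ is bounded and the boundedness estimate of Proposition \ref{propa} applies to any bounded function, not merely an almost periodic one. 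With these routine verifications the proof is complete, the essential content being the transfer of the Fr\'echet decomposition $f = g + \phi$ through the map $h \mapsto \hat{h}$ and the reuse of the modular estimate from the preceding proposition.
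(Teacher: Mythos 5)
Your proposal is correct. The paper itself gives no written proof of Proposition \ref{propa-asym}; it only says the statement ``can be similarly proved'' to Proposition \ref{propa}, which points to a direct adaptation: using the Fr\'echet-style definition of asymptotic almost periodicity (for each $\epsilon>0$ there exist $l>0$ and $M>0$ such that every subinterval of length $l$ contains $\tau$ with $\|f(t+\tau)-f(t)\|\leq\epsilon$ for $t\geq M$), one repeats the modular estimate of Proposition \ref{propa} verbatim to conclude that $\|\hat f(t+\tau)-\hat f(t)\|_{L^{p(x)}([0,1]:X)}\leq\epsilon$ for $t\geq M$, together with the boundedness and uniform continuity of $\hat f$. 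You instead reduce to Proposition \ref{propa} through the decomposition $f=g+\phi$ with $g\in AP([0,\infty):X)$ and $\phi\in C_{0}([0,\infty):X)$, applying that proposition to $g$ as a black box and running the modular estimate only once, on $\phi$, to get $\hat\phi\in C_{0}([0,\infty):L^{p(x)}([0,1]:X))$. Both routes rest on exactly the same inequality $\varphi_{p(x)}(\|h(x+t)\|/\lambda)\leq 1$ whenever $\|h(x+t)\|\leq\lambda$; your version is marginally cleaner in that it isolates the new work in the $C_{0}$ part, at the cost of invoking the equivalence between the two definitions of asymptotic almost periodicity in the target space $L^{p(x)}([0,1]:X)$ (which requires noting that $\hat f$ is bounded and continuous, both of which you correctly have). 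Your handling of the $p(x)=\infty$ set and the observation that the boundedness estimate applies to any bounded function are both sound. Note also that the paper's Remark \ref{propa-lis} sketches yet a third route, via Proposition \ref{ghf=prop}(ii) and the identity $AAP([0,\infty):X)=AAPS^{\infty}([0,\infty):X)\cap C([0,\infty):X)$.
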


Taking into account Proposition \ref{potapanje}(ii) and the method employed in the proof of Proposition \ref{propa}, we can state the following: 

\begin{prop}\label{potapanj-prime}
Assume that $p\in {\mathcal P}([0,1])$ and $f\in L_{S}^{p(x)}(I:X)$. Then the following holds:
\begin{itemize}
\item[(i)] $L^{\infty}(I: X) \hookrightarrow L^{p(x)}_{S}(I : X)\hookrightarrow L^{1}_{S}(I : X).$
\item[(ii)] 
$
AP(I:X) \hookrightarrow APS^{p(x)}(I : X) \hookrightarrow APS^{1}(I : X)
$ and $
AAP( [0,\infty):X) \hookrightarrow AAPS^{p(x)}( [0,\infty) : X) \hookrightarrow AAPS^{1}( [0,\infty) : X).
$
\item[(iii)] The continuity (uniform continuity) of $f(\cdot)$ implies continuity (uniform continuity) of $\hat{f}(\cdot).$
\end{itemize}
\end{prop}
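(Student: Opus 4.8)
The plan is to assemble the three assertions from results already in hand, the only genuinely new ingredients being the leftmost embeddings in (i)--(ii) and the continuity transfer in (iii). Everything rests on one elementary pointwise estimate, which I would record first:
\begin{align*}
\|g\|_{L^{p(x)}([0,1]:X)} \leq \|g\|_{L^{\infty}([0,1]:X)},\quad g\in L^{\infty}([0,1]:X).
\end{align*}
For $\lambda \geq \|g\|_{L^{\infty}([0,1]:X)}$ one has $\|g(x)\|/\lambda \leq 1$ for a.e.\ $x\in[0,1]$, whence $\varphi_{p(x)}(\|g(x)\|/\lambda)\leq 1$ a.e.\ directly from the definition of $\varphi_{p(x)}(\cdot)$, so that $\int_{0}^{1}\varphi_{p(x)}(\|g(x)\|/\lambda)\,dx \leq 1$ and $\lambda$ lies in the set defining the Luxemburg norm; letting $\lambda \downarrow \|g\|_{L^{\infty}([0,1]:X)}$ gives the claim. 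This is exactly the computation already carried out in the proof of Proposition \ref{propa}, now isolated for repeated use.

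For part (i), the embedding $L^{p(x)}_{S}(I:X)\hookrightarrow L^{1}_{S}(I:X)$ is precisely Proposition \ref{potapanje}(i). For the remaining embedding $L^{\infty}(I:X)\hookrightarrow L^{p(x)}_{S}(I:X)$, I would apply the displayed estimate to $g(\cdot)=f(\cdot+t)$, obtaining $\|f(\cdot+t)\|_{p(x)}\leq \|f\|_{\infty}$ for every $t\in I$; taking the supremum over $t$ yields $f\in L^{p(x)}_{S}(I:X)$ together with $\|f\|_{S^{p(x)}}\leq \|f\|_{\infty}$, i.e.\ a continuous embedding with constant $1$. For part (ii), the outer embeddings $APS^{p(x)}(I:X)\hookrightarrow APS^{1}(I:X)$ and $AAPS^{p(x)}([0,\infty):X)\hookrightarrow AAPS^{1}([0,\infty):X)$ are Proposition \ref{potapanje}(ii), while the inner inclusions $AP(I:X)\subseteq APS^{p(x)}(I:X)$ and $AAP([0,\infty):X)\subseteq AAPS^{p(x)}([0,\infty):X)$ are the contents of Propositions \ref{propa} and \ref{propa-asym}; their continuity is again furnished by the bound $\|f\|_{S^{p(x)}}\leq\|f\|_{\infty}$ of the preceding paragraph, with $\|\cdot\|_{\infty}$ the sup-norm carried by $AP$, resp.\ $AAP$.

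Finally, for part (iii) I would argue directly via the displayed estimate applied to the difference $g(\cdot)=f(t+\cdot)-f(t'+\cdot)$, which gives
\begin{align*}
\bigl\|\hat f(t)-\hat f(t')\bigr\|_{L^{p(x)}([0,1]:X)} \leq \sup_{x\in[0,1]}\bigl\|f(t+x)-f(t'+x)\bigr\|.
\end{align*}
In the uniformly continuous case this is immediate: given $\epsilon>0$, choose $\delta>0$ with $|s-s'|<\delta \Rightarrow \|f(s)-f(s')\|\leq\epsilon$; then $|t-t'|<\delta$ forces the right-hand side to be $\leq\epsilon$, so $\hat f(\cdot)$ is uniformly continuous. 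For the merely continuous case, I would fix $t_{0}$, restrict to $t$ with $|t-t_{0}|\leq 1$ so that all arguments $t+x,\,t_{0}+x$ lie in the compact interval $[t_{0}-1,t_{0}+2]$, and use that $f(\cdot)$ is uniformly continuous there; the same estimate then shows $\|\hat f(t)-\hat f(t_{0})\|_{L^{p(x)}([0,1]:X)}\to 0$ as $t\to t_{0}$. This localization is the only point requiring any care: pointwise continuity of $f(\cdot)$ supplies no global modulus of continuity, so the passage to a compact neighborhood is precisely what lets the supremum over $x\in[0,1]$ be controlled. The remainder is bookkeeping against the already established Propositions \ref{potapanje}, \ref{propa} and \ref{propa-asym}.
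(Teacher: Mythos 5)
Your proposal is correct and follows essentially the same route the paper intends: the paper gives no separate proof but derives the proposition from Proposition \ref{potapanje} together with the method of the proof of Proposition \ref{propa}, which is precisely the Luxemburg-norm estimate $\|g\|_{L^{p(x)}([0,1]:X)}\leq \|g\|_{L^{\infty}([0,1]:X)}$ that you isolate and reuse. Your explicit localization to a compact interval in the merely-continuous case of (iii) is a small but welcome elaboration of what the paper leaves implicit.
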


In general case, we have the following:

\begin{prop}\label{ghf=prop}
Assume that $p,\ q\in {\mathcal P}([0,1])$ and $p\leq q$ a.e. on $[0,1].$ Then we have:
\begin{itemize}
\item[(i)] $L^{q(x)}_{S}(I : X)\hookrightarrow L^{p(x)}_{S}(I : X).$
\item[(ii)] 
$
APS^{q(x)}(I : X) \hookrightarrow APS^{p(x)}(I : X)
$ and $
AAPS^{q(x)}([0,\infty) : X) \hookrightarrow AAPS^{p(x)}([0,\infty) : X).
$
\item[(iii)] If $p\in D_{+}([0,1]),$ then 
\begin{align*}
L^{\infty}(I:X) \cap APS^{p(x)}(I : X)=L^{\infty}(I:X) \cap APS^{1}(I:X)
\end{align*}
and
\begin{align*}
L^{\infty}([0,\infty):X) \cap AAPS^{p(x)}([0,\infty) : X)=L^{\infty}([0,\infty):X) \cap AAPS^{1}([0,\infty):X).
\end{align*}
\end{itemize}
\end{prop}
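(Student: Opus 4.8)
The plan is to handle the three parts in the order stated, with (i) and (ii) being soft consequences of the finite-measure embedding in Lemma \ref{aux}(ii), and with (iii) carrying the genuine content.

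For (i) I would invoke Lemma \ref{aux}(ii): since $[0,1]$ has finite Lebesgue measure and $p\leq q$ a.e.\ on $[0,1]$, there is a constant $C\geq 1$ with $\|u\|_{p(x)}\leq C\|u\|_{q(x)}$ for every $u\in L^{q(x)}([0,1]:X)$. Applying this to $u=f(\cdot+t)$ for each $t\in I$ and taking the supremum over $t$ gives $\|f\|_{S^{p(x)}}\leq C\|f\|_{S^{q(x)}}$, which is exactly the asserted continuous embedding of $S$-spaces.

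For (ii) the point is that the identity map $\iota:L^{q(x)}([0,1]:X)\to L^{p(x)}([0,1]:X)$ is, by Lemma \ref{aux}(ii), a bounded linear operator, and bounded linear operators send almost periodic functions to almost periodic functions and $C_{0}$-functions to $C_{0}$-functions. Hence if $\hat{f}:I\to L^{q(x)}([0,1]:X)$ is almost periodic, then $\iota\circ\hat{f}$, which is just $\hat{f}$ regarded as an $L^{p(x)}$-valued map, is almost periodic, so $f\in APS^{p(x)}(I:X)$. The asymptotic case follows the same way after writing $\hat{f}=G+\Phi$ with $G$ almost periodic and $\Phi\in C_{0}([0,\infty):L^{q(x)}([0,1]:X))$ and noting that $\iota$ preserves both summands.

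The heart of the matter is (iii). One inclusion, $L^{\infty}\cap APS^{p(x)}\subseteq L^{\infty}\cap APS^{1}$ (and its asymptotic analogue), is immediate from Proposition \ref{potapanje}(ii). For the reverse inclusion I would first establish a quantitative conversion estimate: if $p\in D_{+}([0,1])$ and $g\in L^{\infty}([0,1]:X)$ with $\|g\|_{\infty}\leq K$, then the pointwise bound $\|g(x)\|^{p(x)}=\|g(x)\|\,\|g(x)\|^{p(x)-1}\leq \max(1,K^{p^{+}-1})\,\|g(x)\|$ yields $\rho(g)\leq \max(1,K^{p^{+}-1})\,\|g\|_{L^{1}}$, and then the standard norm--modular inequality $\|g\|_{p(x)}\leq \rho(g)^{1/p^{+}}$, valid whenever $\rho(g)\leq 1$ (see, e.g., \cite{variable}), gives
\[
\|g\|_{p(x)}\leq \bigl(\max\bigl(1,K^{p^{+}-1}\bigr)\,\|g\|_{L^{1}}\bigr)^{1/p^{+}}
\]
once $\|g\|_{L^{1}}$ is small enough that the modular does not exceed $1$.

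Armed with this estimate I would finish as follows. Let $f\in L^{\infty}(I:X)\cap APS^{1}(I:X)$, put $M:=\|f\|_{\infty}$, and fix $\epsilon>0$. The differences $g_{t,\tau}(\cdot):=f(t+\tau+\cdot)-f(t+\cdot)$ are uniformly bounded by $2M$ on $[0,1]$, so the estimate applies with $K=2M$ uniformly in $t$ and $\tau$. Choosing $\eta>0$ small enough that $\max(1,(2M)^{p^{+}-1})\,\eta\leq 1$ and $(\max(1,(2M)^{p^{+}-1})\,\eta)^{1/p^{+}}\leq\epsilon$, the $S^{1}$-almost periodicity of $f$ furnishes a relatively dense set of $\tau$ with $\sup_{t}\|g_{t,\tau}\|_{L^{1}}\leq\eta$, and for each such $\tau$ the estimate yields $\sup_{t}\|g_{t,\tau}\|_{p(x)}\leq\epsilon$; the same device converts the $L^{1}$-uniform continuity of $\hat{f}$ into $L^{p(x)}$-uniform continuity, so $\hat{f}:I\to L^{p(x)}([0,1]:X)$ is almost periodic and $f\in APS^{p(x)}(I:X)$. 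The asymptotically almost periodic case is identical, using the defining translation-number condition on a half-line $[M_{0},\infty)$ in place of all of $I$. The main obstacle is isolating and proving the conversion estimate with the correct dependence on $K$; this is precisely where the hypothesis $p\in D_{+}([0,1])$ is indispensable, since for exponents attaining the value $\infty$ the passage from the modular to the norm breaks down and boundedness no longer upgrades $L^{1}$-smallness to $L^{p(x)}$-smallness.
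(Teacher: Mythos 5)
Your proof is correct. For parts (i) and (ii) you argue exactly as the paper does: the paper only writes out part (iii), treating (i) and (ii) as immediate consequences of Lemma \ref{aux}(ii), which is precisely your reading. For part (iii) your route differs in one genuine respect. The paper first invokes Proposition \ref{d+}(ii) to reduce everything to the constant exponent $p(x)\equiv p:=p^{+}>1$ and then runs the single classical estimate
\[
\Bigl[\int_{t}^{t+1}\|f(\tau+s)-f(s)\|^{p}\,ds\Bigr]^{1/p}\le\bigl(2\|f\|_{\infty}\bigr)^{(p-1)/p}\Bigl[\int_{t}^{t+1}\|f(\tau+s)-f(s)\|\,ds\Bigr]^{1/p},
\]
with no modular bookkeeping at all; you instead keep the variable exponent throughout, bound the modular by $\rho(g)\le\max(1,K^{p^{+}-1})\|g\|_{L^{1}}$ using the same pointwise inequality $a^{p(x)}\le K^{p(x)-1}a\le\max(1,K^{p^{+}-1})a$ for $0\le a\le K$, and then pass from modular to norm via $\|g\|_{p(x)}\le\rho(g)^{1/p^{+}}$ when $\rho(g)\le 1$. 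The two arguments rest on the identical elementary inequality, so the mathematical content is the same; the paper's reduction buys a cleaner computation, while your direct version is self-contained (it does not lean on Proposition \ref{d+}(ii), whose proof the paper only asserts "can be proved similarly"), it makes explicit exactly where $p^{+}<\infty$ enters through the exponent $1/p^{+}$ in the norm--modular passage, and it correctly handles the continuity of $\hat{f}$ by the same conversion device. Both proofs identify $p\in D_{+}([0,1])$ as the hypothesis that lets boundedness upgrade $L^{1}$-smallness to $L^{p(x)}$-smallness.
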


\begin{proof}
We will prove only (iii) for almost periodicity. Keeping in mind Proposition \ref{d+}(ii), it suffices to assume that $p(x)\equiv p>1.$ Then, clearly, $L^{\infty}(I:X) \cap APS^{p}(I : X) \subseteq L^{\infty}(I:X) \cap APS^{1}(I:X)$ and it remains to be proved the opposite inclusion. So, let $f\in L^{\infty}(I:X) \cap APS^{1}(I:X).$ The required conclusion is a consequence of elementary definitions and following simple calculation, which is valid for any $t,\ \tau \in {\mathbb R}:$
\begin{align*}
\Biggl[ \int^{t+1}_{t}\bigl\|f(\tau +s) & -f(s)\bigr\|^{p}\, ds \Biggr]^{1/p}
\\ & \leq \Biggl[ \int^{t+1}_{t} \bigl( 2\|f\|_{\infty}\bigr)^{p-1}\bigl\|f(\tau +s)  -f(s)\bigr\|\, ds \Biggr]^{1/p}
\\ & =\bigl( 2\|f\|_{\infty}\bigr)^{(p-1)/p}\Biggl[ \int^{t+1}_{t}\bigl\|f(\tau +s)  -f(s)\bigr\|\, ds \Biggr]^{1/p}.
\end{align*}
\end{proof}

\begin{rem}\label{folner}
It is well known that $APS^{p(x)}(I : X)$ can be strictly contained in $APS^{1}(I:X),$ even in the case that $p(x)\equiv p>1$ is a constant function. For example, H. Bohr and E. F$\o$lner have proved that, for any given number $p>1$, we can construct a Stepanov almost periodic function defined on the whole real axis that is not Stepanov $p$-almost periodic (see \cite[Example, p. 70]{bohr-folner}). The same example shows that $AAPS^{p}([0,\infty) : X)$ can be strictly contained in $AAPS^{1}([0,\infty):X)$ for $p>1$ (see e.g. \cite[Lemma 1]{hernan1}).
\end{rem}

\begin{rem}\label{propa-lis}
Proposition \ref{propa} and Proposition \ref{propa-asym} can be simply deduced by using Proposition \ref{ghf=prop}(ii) and the equalities $AP(I : X)=APS^{\infty}(I:X) \cap C(I : X),$ $AAP([0,\infty) : X)=AAPS^{\infty}([0,\infty):X) \cap C([0,\infty) : X)$, which can be proved almost trivially. 
\end{rem}

Now we would like to present the following illustrative example:

\begin{example}\label{ogran-prop}
Define sign$(0):=0.$ Then, for every almost periodic function $f: {\mathbb R} \rightarrow {\mathbb R},$ we have that the function $F(\cdot):=$sign$(f(\cdot))$ is Stepanov $1$-almost periodic (\cite{188}). Since $F\in L^{\infty}({\mathbb R}),$ Proposition \ref{ghf=prop}(iii) yields that the function $F(\cdot)$ is Stepanov $p$-almost periodic for any $p\geq 1,$ while Proposition \ref{potapanj-prime}(i) yields that the function $F(\cdot)$ is Stepanov $p(x)$-bounded for any $p\in {\mathcal P}([0,1]).$ Due to Proposition \ref{d+}(ii), we have 
$F\in APS^{p(x)}({\mathbb R} : {\mathbb C})$ for any $p\in D_{+}([0,1]).$

Consider now the case that $f(x):=\sin x+\sin \sqrt{2}x,$ $x\in {\mathbb R}$ and $p(x):=1-\ln x,$ $x\in [0,1].$ We will prove that $F\notin APS^{p(x)}({\mathbb R} : {\mathbb C}).$ Speaking-matter-of-factly, it is sufficient to show that, for every $\lambda \in (0,2/e)$ and for every $l>0,$ we can find an interval $I\subseteq {\mathbb R}$ of length $l>0$ such that, for every $\tau \in I,$ there exists $t\in {\mathbb R}$ such that 
\begin{align}
\notag \int^{1}_{0} \Bigl( \frac{1}{\lambda}\Bigr)^{1-\ln x}&\Bigl| \text{sign}\bigl[\sin  (x+t+\tau) +\sin \sqrt{2} (x+t+\tau)\bigr]
\\ & \label{singular}-\text{sign}\bigl[\sin (x+t) +\sin \sqrt{2}(x+t)\bigr] \Bigr|^{1-\ln x}\, dx =\infty.
\end{align}
Let  $\lambda \in (0,2/e)$ and $l>0$ be given. Take arbitrarily any interval $I \subseteq {\mathbb R} \setminus \{0\}$ of length $l$ and after that take arbitrarily any number $\tau \in I.$ Since $(1/\lambda)^{1-\ln x}\geq 1/x,$ $x\in [0,1]$ and $1-\ln x\geq 1,$ $x\in [0,1],$ a continuity argument shows that it is enough to prove the existence of a number $t\in {\mathbb R}$ such that
\begin{align}\label{cvb-propa}
\bigl[\sin  (t+\tau) +\sin \sqrt{2} (t+\tau)\bigr] \cdot \bigl[\sin t +\sin \sqrt{2}t\bigr] <0.
\end{align}
If $\sin \tau +\sin \sqrt{2} \tau>0$ ($\sin \tau +\sin \sqrt{2} \tau <0$), then we can take $t\sim 0-$ ($t\sim 0+$). Hence, we assume henceforward $\sin \tau +\sin \sqrt{2} \tau=0$ and $\tau \neq 0.$ There exist two possibilities:
$$
\tau \in \frac{2{\mathbb Z}\pi}{1+\sqrt{2}} \setminus \{0\}\ \ \mbox{ or }\ \ \tau \in \frac{(2{\mathbb Z}+1)\pi}{\sqrt{2}-1}.
$$
In the first case, take $t_{0}=\frac{\pi}{\sqrt{2}-1}.$ Then an elementary argumentation shows that $\tau +t_{0}\notin \frac{2{\mathbb Z}\pi}{1+\sqrt{2}} \cup \frac{(2{\mathbb Z}+1)\pi}{\sqrt{2}-1}$ so that $\sin  (t_{0}+\tau) +\sin \sqrt{2} (t_{0}+\tau)\neq 0.$ If $\sin  (t_{0}+\tau) +\sin \sqrt{2} (t_{0}+\tau)>0$ ($\sin  (t_{0}+\tau) +\sin \sqrt{2} (t_{0}+\tau)<0$), then for $t$ satisfying \eqref{cvb-propa} we can take any number belonging to a small left/right interval around $t_{0}$ for which $\sin t +\sin \sqrt{2}t<0$ ($\sin t +\sin \sqrt{2}t>0$). In the second case, there exists an integer $m\in {\mathbb Z}$ such that $\tau=\frac{(2m+1)\pi}{\sqrt{2}-1}$ and we can take $t_{0}=\frac{(-2m+1)\pi}{\sqrt{2}-1}.$ Then $\tau+t_{0}=\frac{2\pi}{\sqrt{2}-1}$ and $\sin  (t_{0}+\tau) +\sin \sqrt{2} (t_{0}+\tau)\neq 0,$ so that we can use a trick similar to that used in the first case. Let us only mention in passing that, with the notion introduced in \cite{toka-marek-prim}, the function $F(\cdot)$ cannot be $S^{p(x)}$-almost automorphic, as well.

The situation is quite different if we consider the case that $f(x):=\sin x,$ $x\in {\mathbb R}.$ Then $F(\cdot)$ is Stepanov $p(x)$-almost periodic for any $p\in {\mathcal P}([0,1]).$ Speaking-matter-of-factly, it can be easily shown that the mapping $\hat{F} : {\mathbb R}\rightarrow L^{p(x)}[0,1] $ is continuous and $\| F(t+\tau +\cdot)-F(t+\cdot)\|_{ L^{p(x)}[0,1] }=0$ for all $t\in {\mathbb R}$ and $\tau \in 2\pi {\mathbb Z}.$ This, in turn, implies the claimed statement.
\end{example}

Keeping in mind the proofs of Proposition \ref{propa}, \cite[Proposition 3.5]{toka-marek-prim} and \cite[Lemma 1]{hernan1}, we can clarify the following result:

\begin{prop}\label{tricky}
Suppose that $p\in {\mathcal P}([0,1])$ and $ f: [0,\infty)\rightarrow X$ is an asymptotically $S^{p(x)}$-almost periodic
function. Then there are uniquely determined $S^{p(x)}$-bounded functions $g: {\mathbb R} \rightarrow X$ and
$q: [0,\infty)\rightarrow X$ satisfying the following conditions:
\begin{itemize}
\item[(i)] $g$ is $S^{p(x)}$-almost periodic,
\item[(ii)] $\hat{q}$ belongs to the class $C_{0}([0,\infty) : L^{p(x)}([0,1]:X)),$
\item[(iii)] $f(t)=g(t)+q(t)$ for all $t\geq 0.$
\end{itemize}
Moreover, there exists an increasing sequence $(t_{n})_{n\in {\mathbb N}}$ of positive reals such that $\lim_{n\rightarrow \infty}t_{n}=\infty$
and $g(t)=\lim_{n\rightarrow \infty}f(t+ t_{n})$ a.e. $t\geq 0.$
\end{prop}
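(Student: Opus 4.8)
The plan is to transfer the problem into the Banach space $Y:=L^{p(x)}([0,1]:X)$ and to apply the classical Fr\'echet decomposition there. The starting observation is that the assignment $f\mapsto\hat{f}$ is an isometry from $L_{S}^{p(x)}(I:X)$ onto its image in the space of bounded $Y$-valued functions, since $\|f\|_{S^{p(x)}}=\sup_{t}\|\hat{f}(t)\|_{Y}$. By hypothesis $\hat{f}:[0,\infty)\to Y$ is asymptotically almost periodic, so the classical decomposition theorem for $Y$-valued asymptotically almost periodic functions yields a unique splitting $\hat{f}=G+\Phi$ with $G\in AP([0,\infty):Y)$ and $\Phi\in C_{0}([0,\infty):Y)$.

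Next I would build the function $g$. By Bochner's criterion applied to the almost periodic function $G$, choose an increasing sequence $(t_{n})$ with $t_{n}\to\infty$ such that $G(\cdot+t_{n})\to G(\cdot)$ uniformly on $[0,\infty)$ in the norm of $Y$; since $\Phi\in C_{0}$ forces $\sup_{t\geq0}\|\Phi(t+t_{n})\|_{Y}=\sup_{s\geq t_{n}}\|\Phi(s)\|_{Y}\to0$, it follows that $\hat{f}(\cdot+t_{n})\to G(\cdot)$ uniformly in $Y$. Transporting this back through the isometry, the genuine $X$-valued translates $f(\cdot+t_{n})$ form a Cauchy sequence in the Banach space $L_{S}^{p(x)}$ (on each half-line $[a,\infty)$, and, as $t_{n}\to\infty$, on all of $\mathbb{R}$), hence converge to a limit $g:\mathbb{R}\to X$ whose $\hat{g}$ equals the almost periodic extension of $G$. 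In particular $\hat{g}$ is almost periodic, so $g$ is $S^{p(x)}$-almost periodic and $S^{p(x)}$-bounded. This is the step I expect to be the main obstacle, since it is exactly where the abstract principal part $G$ must be realized as $\hat{g}$ for an honest $X$-valued function (a consistency/concatenation issue), and it is resolved precisely by the completeness of $L_{S}^{p(x)}$ together with the isometry.

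Then I would set $q:=f-g|_{[0,\infty)}$ on $[0,\infty)$. Linearity of the hat map gives $\hat{q}=\hat{f}-\hat{g}=(G+\Phi)-G=\Phi$, so $\hat{q}\in C_{0}([0,\infty):L^{p(x)}([0,1]:X))$, which is (ii) and shows $q\in S^{p(x)}_{0}$; conditions (i) and (iii) are then immediate, and both $g$ and $q$ are $S^{p(x)}$-bounded. For uniqueness, if $g_{1}+q_{1}=g_{2}+q_{2}$ are two such decompositions, then $\hat{g}_{1}-\hat{g}_{2}$ is almost periodic while it also equals $\hat{q}_{2}-\hat{q}_{1}\in C_{0}$; since $AP([0,\infty):Y)\cap C_{0}([0,\infty):Y)=\{0\}$, this difference vanishes, forcing $g_{1}=g_{2}$ (via uniqueness of the almost periodic extension) and hence $q_{1}=q_{2}$.

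Finally, for the ``moreover'' part, I would upgrade the convergence $f(\cdot+t_{n})\to g$ in $L_{S}^{p(x)}$ to pointwise convergence. By Proposition \ref{potapanje}(i) the embedding $L_{S}^{p(x)}\hookrightarrow L_{S}^{1}$ yields convergence in $L_{S}^{1}$, hence $f(\cdot+t_{n})\to g$ in $L^{1}$ on each bounded interval $[0,N]$; passing to a subsequence gives a.e. convergence on $[0,N]$, and a diagonal argument over $N\in\mathbb{N}$ produces a single subsequence (still increasing and still tending to $\infty$) along which $g(t)=\lim_{n\to\infty}f(t+t_{n})$ for a.e. $t\geq0$, completing the proof.
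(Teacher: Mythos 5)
Your proposal is correct and follows essentially the same route the paper indicates (it refers the reader to the proof of Henr\'iquez's Lemma 1 and its analogues): decompose $\hat{f}=G+\Phi$ in $Y=L^{p(x)}([0,1]:X)$, realize the principal term $G$ as $\hat{g}$ by taking a sequence of almost periods $t_{n}\to\infty$ and using the isometry $f\mapsto\hat{f}$ together with completeness of $L_{S}^{p(x)}$, and then extract the a.e.\ limit for the final assertion. The one point worth making explicit in a write-up is that the image of $L_{S}^{p(x)}(I:X)$ under the hat map is closed (being isometric to a complete space), which is exactly what guarantees that the abstract limit $G$ is of the form $\hat{g}$; you identify and resolve this correctly.
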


\begin{rem}\label{weyl-podx}
The definition of an (equi-)Weyl $p(x)$-almost periodic function (see e.g. \cite{nova-mono} for the case that $p(x)\equiv p\in [1,\infty)$) can be introduced as follows: Suppose $I={\mathbb R}$ or $I=[0,\infty).$  
Let $p\in {\mathcal P}(I)$ and $f(\cdot +\tau)\in L^{p(x)}( K : X)$ for any $\tau \in I$ and any compact subset $K$ of $I.$ 
\begin{itemize}
\item[(i)] It is said that the function $f(\cdot)$ is equi-Weyl-$p(x)$-almost periodic, $f\in e-W_{ap}^{p(x)}(I:X)$ for short, iff for each $\epsilon>0$ we can find two real numbers $l>0$ and $L>0$ such that any interval $I'\subseteq I$ of length $L$ contains a point $\tau \in  I'$ such that
\begin{align*}
\sup_{t\in I}\Biggl[ l^{(-1)/p(t)}\bigl \| f(\cdot+\tau) -f(\cdot)\bigr\|_{L^{p(x)}[t,t+l]}\Biggr] \leq \epsilon .
\end{align*}
\item[(ii)] It is said that the function $f(\cdot)$ is Weyl-$p(x)$-almost periodic, $f\in W_{ap}^{p(x)}(I: X)$ for short, iff for each $\epsilon>0$ we can find a real number $L>0$ such that any interval $I'\subseteq I$ of length $L$ contains a point $\tau \in  I'$ such that
\begin{align*}
\lim_{l\rightarrow \infty} \sup_{t\in I}\Biggl[ l^{(-1)/p(t)}\bigl \| f(\cdot+\tau) -f(\cdot)\bigr\|_{L^{p(x)}[t,t+l]}\Biggr]  \leq \epsilon .
\end{align*}
\end{itemize}
The notion of (equi-)Weyl $p(x)$-almost periodicity as well as the corresponding notion for Besicovitch classes of almost periodic functions will not attract our attention here. We will also skip all details concerning asymptotical  $p(x)$-almost periodicity for Weyl and Besicovitch classes.
\end{rem}

\section{Generalized two-parameter almost periodic type functions and composition principles}\label{dodato}

Assume that $(Y,\|\cdot \|_{Y})$ is a complex Banach space, as well as that $I={\mathbb R}$ or $I=[0,\infty).$ 
By $C_{0}([0,\infty) \times Y : X)$ we denote the space consisting of all
continuous functions $h : [0,\infty)  \times Y \rightarrow X$ such that $\lim_{t\rightarrow \infty}h(t, y) = 0$ uniformly for $y$ in any compact subset of $Y .$ A continuous function $f : I  \times Y  \rightarrow X$ is said to be uniformly continuous on
bounded sets, uniformly for $t \in I$ iff for every $	\epsilon > 0$ and every bounded subset $K$ of $Y$ there
exists a number $\delta_{\epsilon,K }> 0$ such that $\|f(t , x)- f (t, y)\| \leq \epsilon$ for all $ t \in I$ and all $x,\ y \in K$ satisfying that $\|x-y\|\leq \delta_{\epsilon,K }.$ If $f : I  \times Y  \rightarrow X,$ set $\hat{f}(t , y):=f(t +\cdot, y),$ $t\geq 0,$ $y\in Y.$

We need to recall the following well-known definition (see e.g. \cite{nova-mono} for more details):

\begin{defn}\label{definicija}
Let $1\leq p <\infty.$
\begin{itemize}
\item[(i)]\index{function!two-parameter!almost periodic}
A function $f : I \times Y \rightarrow X$ is said to be almost periodic iff $f (\cdot, \cdot)$ is bounded, continuous as well as for every $\epsilon>0$ and every compact
$K\subseteq Y$ there exists $l(\epsilon,K) > 0$ such that every subinterval $J\subseteq I$ of length $l(\epsilon,K)$ contains a number $\tau$ with the property that $\|f (t +\tau , y)- f (t, y)\| \leq \epsilon$ for all $t \in  I,$ $ y \in K.$ The collection of such functions will be denoted by $AP(I \times Y : X).$
\item[(ii)] A function $f : [0,\infty)  \times Y \rightarrow X$ is said to be asymptotically almost periodic iff it is bounded continuous and admits a\index{function!two-parameter!asymptotically almost periodic}
decomposition $f = g + q,$ where $g \in AP([0,\infty)  \times Y : X)$ and $q\in C_{0}([0,\infty)  \times Y : X).$ Denote by
 $AAP([0,\infty)  \times Y : X) $ the vector space consisting of all such functions.
\end{itemize}
\end{defn}

The notion of (asymptotical) Stepanov $p(x)$-almost periodicity for the functions depending on two parameters is introduced as follows:

\begin{defn}\label{stepa-px}
Let $p\in {\mathcal P}([0,1]).$
\begin{itemize}
\item[(i)] A function $f : I \times Y \rightarrow X$ is called Stepanov $p(x)$-almost periodic, $S^{p(x)}$-almost periodic for short, iff $\hat{f} : I  \times Y  \rightarrow L^{p(x)}([0,1]:X)$ is almost periodic. The vector space consisting of all such functions will be denoted by $APS^{p(x)}(I \times Y : X).$
\item[(ii)]
A function $f : [0,\infty)  \times Y \rightarrow X$
is said to be asymptotically $S^{p(x)}$-almost periodic\index{function!two-parameter!asymptotically Stepanov almost periodic}
iff $\hat{f}: [0,\infty)  \times Y \rightarrow  L^{p(x)}([0,1]:X)$ is asymptotically almost periodic. The vector space consisting of all such functions will be denoted by $AAPS^{p(x)}([0,\infty) \times Y : X).$
\end{itemize}
\end{defn}

The proof of following proposition is very similar to the proof of \cite[Lemma 2.2.6]{nova-mono} and therefore omitted.

\begin{prop}\label{tricky-prim}
Let $p\in {\mathcal P}([0,1]).$
Suppose that $ f: [0,\infty) \times Y \rightarrow X$ is an asymptotically $S^{p(x)}$-almost periodic
function. Then there are two functions $g: {\mathbb R} \times Y \rightarrow X$ and
$q: [0,\infty) \times Y \rightarrow X$ satisfying that for each $y\in Y$ the functions
$g(\cdot,y)$ and
$q(\cdot,y)$ are Stepanov $p(x)$-bounded, as well as that
the following holds:
\begin{itemize}
\item[(i)] $\hat{g} : {\mathbb R} \times Y \rightarrow L^{p(x)}([0,1]:X)$ is almost periodic,
\item[(ii)] $\hat{q} \in C_{0}([0,\infty) \times Y : L^{p(x)}([0,1]:X)),$
\item[(iii)] $f(t,y)=g(t,y)+q(t,y)$ for all $t\geq 0$ and $y\in Y.$
\end{itemize}
Moreover, for every compact set $K \subseteq Y,$ there exists an increasing sequence $(t_{n})_{n\in {\mathbb N}}$ of positive reals such that $\lim_{n\rightarrow \infty}t_{n}=\infty$
and $g(t,y)=\lim_{n\rightarrow \infty}f(t+ t_{n},y)$ for all $ y\in Y$  and a.e. $t\geq 0.$
\end{prop}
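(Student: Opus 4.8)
The plan is to reduce everything to the one--parameter situation treated in Proposition \ref{tricky}, combined with the defining property of asymptotic almost periodicity for two--parameter maps (Definition \ref{definicija}(ii)), all carried out in the target space $L^{p(x)}([0,1]:X)$. First I would unwind Definition \ref{stepa-px}(ii): since $f$ is asymptotically $S^{p(x)}$-almost periodic, the map $\hat{f} : [0,\infty) \times Y \to L^{p(x)}([0,1]:X)$ is asymptotically almost periodic, so it admits a decomposition $\hat{f} = G + Q$ with $G \in AP([0,\infty) \times Y : L^{p(x)}([0,1]:X))$ and $Q \in C_{0}([0,\infty) \times Y : L^{p(x)}([0,1]:X))$. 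Because any almost periodic function on $[0,\infty)$ extends uniquely to an almost periodic function on ${\mathbb R},$ I extend $G$ to an almost periodic map $G : {\mathbb R} \times Y \to L^{p(x)}([0,1]:X).$

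The central step is to recognize $G$ as the ``hat'' of a genuine $X$-valued function. For each fixed $y \in Y,$ the assignment $h \mapsto \hat{h}(\cdot,y)$ is, by the very definition of $\|\cdot\|_{S^{p(x)}},$ an isometry from $L_{S}^{p(x)}({\mathbb R}:X)$ into the Banach space of bounded $L^{p(x)}([0,1]:X)$-valued functions on ${\mathbb R}$ endowed with the supremum norm; since $L_{S}^{p(x)}({\mathbb R}:X)$ is itself a Banach space, the image of this isometry is complete, hence closed. I would then exhibit a sequence $(t_{n})$ with $\hat{f}(\cdot + t_{n}, y) \to G(\cdot, y)$ uniformly (constructed below); as each $\hat{f}(\cdot + t_{n}, y) = \widehat{f(\cdot + t_{n}, y)}$ already lies in this closed image, so does the limit $G(\cdot, y).$ Thus $G(\cdot, y) = \hat{g}(\cdot, y)$ for a uniquely determined $g(\cdot, y) \in L_{S}^{p(x)}({\mathbb R}:X),$ and putting $q := f - g$ on $[0,\infty) \times Y$ gives $\hat{q} = \hat{f} - G = Q.$ Conclusions (i)--(iii) are then immediate: $\hat{g} = G$ is almost periodic, $\hat{q} = Q \in C_{0},$ and $f = g + q,$ while both $g(\cdot,y)$ and $q(\cdot,y)$ are $S^{p(x)}$-bounded because $G(\cdot,y)$ and $Q(\cdot,y)$ are bounded.

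It remains to build $(t_{n})$ and verify the limit formula. Given a compact set $K \subseteq Y,$ I would select, for each $n,$ a $(1/n)$-almost period $t_{n}$ of $G$ relative to $K$; these form a relatively dense set, so they may be chosen with $t_{n} \uparrow \infty,$ giving $\sup_{t,\,y \in K}\|G(t + t_{n}, y) - G(t, y)\|_{L^{p(x)}([0,1]:X)} \leq 1/n.$ Since $t_{n} \to \infty$ and $Q \in C_{0},$ one also has $\sup_{t \geq 0,\,y \in K}\|Q(t + t_{n}, y)\|_{L^{p(x)}([0,1]:X)} \to 0,$ and adding the two estimates yields $\hat{f}(\cdot + t_{n}, y) \to G(\cdot, y) = \hat{g}(\cdot, y)$ uniformly for $y \in K.$ Invoking the continuous embedding $L_{S}^{p(x)} \hookrightarrow L_{S}^{1}$ from Proposition \ref{potapanje}(i), this convergence also holds in the Stepanov $L^{1}$ norm, so that $\int_{t}^{t+1}\|f(s + t_{n}, y) - g(s, y)\|\,ds \to 0$ uniformly in $t;$ passing to a subsequence (still written $(t_{n})$) produces the pointwise identity $g(t, y) = \lim_{n \to \infty} f(t + t_{n}, y)$ for every $y \in K$ and a.e.\ $t \geq 0.$

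I expect the main obstacle to be precisely the reconstruction step: guaranteeing that the $L^{p(x)}([0,1]:X)$-valued limit $G(\cdot, y)$ is genuinely of the form $\hat{g}(\cdot, y),$ and that the resulting $g$ is shift-compatible across adjacent unit intervals so as to define a single function on ${\mathbb R}.$ This is exactly what the closedness of the isometric image of $L_{S}^{p(x)}$ supplies, and it is also the place where the variable exponent must be handled solely through the embedding into $L^{1}$ and the Banach-space structure recorded in Section \ref{section3}, rather than through any explicit $L^{p}$ computation as in the constant-exponent model \cite[Lemma 2.2.6]{nova-mono}.
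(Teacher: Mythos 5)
The paper offers no proof of Proposition \ref{tricky-prim} at all: it is declared ``very similar to the proof of \cite[Lemma 2.2.6]{nova-mono}'' and omitted, so your reconstruction has to be measured against the standard Henr\'iquez-type decomposition argument that this reference encodes (cf. \cite[Lemma 1]{hernan1} and Proposition \ref{tricky}). Your route is essentially that argument: split $\hat{f}=G+Q$ via Definition \ref{definicija}(ii), extend $G$ almost periodically to ${\mathbb R}\times Y$, pull $G$ back to an $X$-valued function $g$, and set $q:=f-g$; parts (i)--(iii) then come out correctly. Where you genuinely repackage the classical write-up is the reconstruction step: instead of defining $g(t,y)$ as a Bochner-type limit of $f(t+t_{n},y)$ in $L^{p(x)}([t,t+1]:X)$ and checking consistency across overlapping windows, you observe that $h\mapsto\hat{h}$ is a linear isometry of the Banach space $L_{S}^{p(x)}(\cdot\,:X)$ into the bounded $L^{p(x)}([0,1]:X)$-valued functions, so its range is closed and the uniform limit $G(\cdot,y)$ of the hats of the translates is automatically a hat. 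This is clean and correct, modulo the domain bookkeeping you yourself flag (the translates live only on $[-t_{n},\infty)$, so one applies the isometry argument on each $[-m,\infty)$ and glues by its injectivity); it buys a proof that never touches the variable exponent except through the Banach-space structure of $L_{S}^{p(x)}$, which is exactly in the spirit of Section \ref{section3}.

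The one step that does not yet close is the final ``Moreover'' clause. From $\sup_{t}\sup_{y\in K}\int_{t}^{t+1}\|f(s+t_{n},y)-g(s,y)\|\,ds\to 0$ you can extract, for each \emph{fixed} $y$, a subsequence along which $f(t+t_{n_{k}},y)\to g(t,y)$ for a.e.\ $t$; but the proposition asks for a \emph{single} increasing sequence serving every $y$ in the (possibly uncountable) compact set $K$ at once. Since the supremum over $y$ does not commute with the integral, $L^{1}$-convergence that is uniform in $y$ does not by itself produce a common a.e.-convergent subsequence for all $y\in K$, and the obvious repair --- diagonalize over a countable dense subset $D\subseteq K$ and transfer to general $y$ --- stalls because continuity of $y\mapsto\hat{f}(t,y)$ only controls $\|f(\cdot,y)-f(\cdot,y')\|$ in integrated, not pointwise, form. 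So ``passing to a subsequence'' is a genuine gap here (one that is admittedly also glossed over in the constant-exponent sources); everything before that point is sound.
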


In \cite[Theorem 2.7.1, Theorem 2.7.2]{nova-mono}, we have slightly improved the important composition principle atributed to W. Long, S.-H. Ding \cite[Theorem 2.2]{comp-adv}. Further refinements for $S^{p(x)}$-almost periodicity can be deduced similarly, with appealing to Lemma \ref{aux}(i)-(iii) and the arguments employed in the proof of \cite[Theorem 2.2]{comp-adv}: 

\begin{thm}\label{vcb-show}
Let $I={\mathbb R}$ or $I=[0,\infty),$ and let $p\in {\mathcal P}([0,1]).$
Suppose that the following conditions hold:
\begin{itemize}
\item[(i)] $f \in APS^{p(x)}(I \times Y : X)  $ and there exist a function $r\in {\mathcal P}([0,1])$ such that $ r(\cdot)\geq \max (p(\cdot), p(\cdot)/p(\cdot) -1)$ and a function $ L_{f}\in L_{S}^{r(x)}(I) $ such that:
\begin{align}\label{vbnmp}
\| f(t,x)-f(t,y)\| \leq L_{f}(t)\|x-y\|_{Y},\quad t\in I,\ x,\ y\in Y;
\end{align}
\item[(ii)] $u \in APS^{p(x)} (I: Y),$ and there exists a set ${\mathrm E} \subseteq I$ with $m ({\mathrm E})= 0$ such that
$ K :=\{u(t) : t \in I \setminus {\mathrm E}\}$
is relatively compact in $Y;$ here, $m(\cdot)$ denotes the Lebesgue measure.
\end{itemize}
Define $q\in {\mathcal P}([0,1])$ by
$q(x):=p(x)r(x)/p(x)+r(x),$ if $x\in [0,1]$ and $r(x)<\infty,$ $q(x):=p(x),$ if $x\in [0,1]$ and $r(x)=\infty.$ Then $q(x)\in [1, p(x))$ for $x\in [0,1],$ $r(x)<\infty$ and $f(\cdot, u(\cdot)) \in APS^{q(x)}(I : X).$
\end{thm}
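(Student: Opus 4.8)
The plan is to prove the statement by showing directly that the hat-function $\widehat{f(\cdot,u(\cdot))} : I \to L^{q(x)}([0,1]:X)$ is almost periodic, which by Definition \ref{sasasa}(i) is exactly $S^{q(x)}$-almost periodicity of $F(\cdot):=f(\cdot,u(\cdot))$. The first, essentially bookkeeping, step is to record the pointwise reciprocal identity $1/q(x)=1/p(x)+1/r(x)$ (for $x$ with $r(x)<\infty$), which is precisely the relation demanded by the Hölder-type inequality in Lemma \ref{aux}(i). From $r(\cdot)\ge p(\cdot)/(p(\cdot)-1)$ one reads off $1/q(x)\le 1$, i.e.\ $q(x)\ge 1$, while $1/r(x)>0$ forces $q(x)<p(x)$; this gives the range assertion $q(x)\in[1,p(x))$ and, since $q\le p$ on $[0,1]$, the continuous embedding $L^{p(x)}([0,1]:X)\hookrightarrow L^{q(x)}([0,1]:X)$ via Lemma \ref{aux}(ii), which I will use repeatedly. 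That $\hat F(t)\in L^{q(x)}$ for each $t$ then follows because $f(t+\cdot,y_k)\in L^{p(x)}$, $L_f(t+\cdot)\in L^{r(x)}\hookrightarrow L^{q(x)}$, and the Lipschitz bound \eqref{vbnmp}.

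For the core almost-periodicity estimate, fix $\epsilon>0$, cover the compact set $K$ by finitely many balls $B(y_k,\delta)$, $1\le k\le m$, and select a number $\tau$ that is simultaneously a common almost period for the finite family $\{\hat f(\cdot,y_k)\}_{k=1}^m$ (available from the two-parameter almost periodicity of $\hat f$ in (i) applied to the finite, hence compact, set $\{y_1,\dots,y_m\}$) and for $\hat u$ (from (ii)); such $\tau$ range over a relatively dense subset of $I$. For fixed $t$ I would split
$$F(t+\tau+s)-F(t+s)=\bigl[f(t+\tau+s,u(t+\tau+s))-f(t+\tau+s,u(t+s))\bigr]+\bigl[f(t+\tau+s,u(t+s))-f(t+s,u(t+s))\bigr],$$
and then decompose the second bracket by inserting $\pm f(\,\cdot\,,y_{k(s)})$, where $k(s)$ is chosen so that $u(t+s)\in B(y_{k(s)},\delta)$. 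The second-argument shift term and the two nearest-center terms are all dominated pointwise by $L_f(t+\tau+s)$ or $L_f(t+s)$ times $\|u(t+\tau+s)-u(t+s)\|_Y$ or $\delta$; applying Lemma \ref{aux}(iii) followed by Lemma \ref{aux}(i) with the exponents $r(x)$ and $p(x)$ turns these into the products $2\|L_f\|_{S^{r(x)}}\,\|\hat u(t+\tau)-\hat u(t)\|_{p(x)}$ and $2\delta\,\|L_f\|_{S^{r(x)}}\,\|1\|_{L^{p(x)}([0,1])}$, both of which are controllable.

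The genuinely delicate piece, and the place I expect the main obstacle, is the middle term $M(s):=f(t+\tau+s,y_{k(s)})-f(t+s,y_{k(s)})$, where the center $y_{k(s)}$ jumps with $s$, so the fixed-$y$ almost-period inequality cannot be applied as it stands. I would resolve this by partitioning $[0,1]$ into the measurable sets $\Omega_k:=\{s:k(s)=k\}$, using the crude pointwise bound $\|M(s)\|\le\sum_{k=1}^m\|f(t+\tau+s,y_k)-f(t+s,y_k)\|$, and then combining Lemma \ref{aux}(iii), the triangle inequality for $\|\cdot\|_{q(x)}$, and the embedding $\|\cdot\|_{q(x)}\le C\|\cdot\|_{p(x)}$ to obtain $\|M\|_{q(x)}\le C\sum_{k=1}^m\|\hat f(t+\tau,y_k)-\hat f(t,y_k)\|_{p(x)}$. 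This introduces the factor $m$, so the \emph{order} of the choices is crucial: I would fix $\delta$ first (which determines $m=m(\delta)$), then require the common almost period to be accurate to $\epsilon/(3Cm)$ for the $f(\cdot,y_k)$ and to $\epsilon/(6\|L_f\|_{S^{r(x)}})$ for $u$, having shrunk $\delta$ so that the Lipschitz pieces are at most $\epsilon/3$. Summing the three contributions yields $\|\hat F(t+\tau)-\hat F(t)\|_{q(x)}\le\epsilon$ uniformly in $t$, so the $\epsilon$-almost periods of $\hat F$ are relatively dense; the uniform continuity of $\hat F$ (needed for almost periodicity) follows from the identical covering/Lipschitz estimates with $\tau$ replaced by a small shift, completing the proof that $f(\cdot,u(\cdot))\in APS^{q(x)}(I:X)$.
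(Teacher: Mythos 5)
Your proposal is correct and matches the paper's intended proof: the paper offers no written argument for Theorem \ref{vcb-show} beyond the remark that it follows by combining Lemma \ref{aux}(i)--(iii) with the arguments of the Long--Ding composition theorem \cite[Theorem 2.2]{comp-adv}, and your finite covering of $K$, the three-term splitting with the crude sum bound over the varying centers $y_{k(s)}$, and the H\"older step based on $1/q(x)=1/p(x)+1/r(x)$ constitute exactly that argument carried out in the variable-exponent setting. Your observation that $\delta$ (hence $m$) must be fixed before calibrating the accuracy of the common almost period is the right way to order the choices.
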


Concerning asymptotical two-parameter Stepanov $p(x)$-almost periodicity, we can deduce the following composition principles with $X=Y;$ the proof is very similar to those of \cite[Proposition 2.7.3, Proposition 2.7.4]{nova-mono} established in the case of constant functions $p,\ q,\ r:$

\begin{prop}\label{bibl}
Let $I =[0,\infty),$ and let $p\in {\mathcal P}([0,1]).$
Suppose that the following conditions hold:
\begin{itemize}
\item[(i)] $g \in APS^{p(x)}(I \times X : X)  ,$ there exist a function $r\in {\mathcal P}([0,1])$ such that $ r(\cdot)\geq \max (p(\cdot), p(\cdot)/p(\cdot) -1)$ and a function $ L_{g}\in L_{S}^{r(x)}(I) $ such that \eqref{vbnmp} holds with the function $f(\cdot, \cdot)  $ replaced by the function $g(\cdot, \cdot)  $ therein.
\item[(ii)] $v \in APS^{p(x)}(I:X),$ and there exists a set ${\mathrm E} \subseteq I$ with $m ({\mathrm E})= 0$ such that
$ K =\{v(t) : t \in I \setminus {\mathrm E}\}$
is relatively compact in X.
\item[(iii)] $f(t,x)=g(t,x)+q(t,x)$ for all $t\geq 0$ and $x\in X,$ where $\hat{q}\in C_{0}([0,\infty) \times X : L^{q(x)}([0,1]:X))$
with $q(\cdot)$ defined as above;
\item[(iv)] $u(t)=v(t)+\omega(t) $ for all $t\geq 0,$ where $\hat{\omega}\in C_{0}([0,\infty) : L^{p(x)}([0,1]:X)).$
\item[(v)]  There exists a set $E' \subseteq I$ with $m (E')= 0$ such that
$ K' =\{u(t) : t \in I \setminus E'\}$
is relatively compact in $ X.$
\end{itemize}
Then $f(\cdot, u(\cdot)) \in AAPS^{q(x)}(I : X).$
\end{prop}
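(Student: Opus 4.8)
The plan is to split $f(\cdot,u(\cdot))$ into an $S^{q(x)}$-almost periodic principal part and a remainder lying in the class $S^{q(x)}_{0}([0,\infty):X)$, and then to invoke, at the level of the associated $L^{q(x)}([0,1]:X)$-valued function $\widehat{f(\cdot,u(\cdot))}$, the characterization of asymptotic almost periodicity as a sum of an almost periodic function and a function vanishing at infinity. Concretely, I would write, for $t\geq 0$,
\begin{align*}
f(t,u(t))=g(t,v(t))+\bigl[g(t,u(t))-g(t,v(t))\bigr]+q(t,u(t)),
\end{align*}
using (iii) in the form $f(t,u(t))-g(t,u(t))=q(t,u(t))$. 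The first term is the candidate almost periodic part, while the remaining two terms should together constitute the $C_{0}$ part. Since hats are additive in the underlying function, $\widehat{f(\cdot,u(\cdot))}$ decomposes accordingly.

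First I would show $g(\cdot,v(\cdot))\in APS^{q(x)}(I:X)$. This is exactly the conclusion of Theorem \ref{vcb-show} applied to the pair $(g,v)$: hypotheses (i)--(ii) of the present proposition supply precisely the two-parameter $S^{p(x)}$-almost periodicity of $g$, the Lipschitz estimate \eqref{vbnmp} with $L_{g}\in L_{S}^{r(x)}(I)$, the $S^{p(x)}$-almost periodicity of $v$, and the relative compactness of its essential range; moreover the exponent $q(x)$ defined in (iii) coincides with the one produced by that theorem.

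Next I would treat the Lipschitz difference $g(\cdot,u(\cdot))-g(\cdot,v(\cdot))$. Using \eqref{vbnmp} together with $u-v=\omega$ gives the pointwise bound $\|g(t+s,u(t+s))-g(t+s,v(t+s))\|\leq L_{g}(t+s)\,\|\omega(t+s)\|$ for $s\in[0,1]$. By Lemma \ref{aux}(iii) the $L^{q(x)}([0,1]:X)$-norm of $s\mapsto g(t+s,u(t+s))-g(t+s,v(t+s))$ is dominated by the $L^{q(x)}$-norm of the scalar function $s\mapsto L_{g}(t+s)\|\omega(t+s)\|$, and Lemma \ref{aux}(i) with $1/q(x)=1/p(x)+1/r(x)$ then yields a bound of the form $2\,\|L_{g}\|_{S^{r(x)}}\,\|\widehat{\omega}(t)\|_{L^{p(x)}([0,1]:X)}$. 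Since $\widehat{\omega}\in C_{0}([0,\infty):L^{p(x)}([0,1]:X))$ by (iv), the factor $\|\widehat{\omega}(t)\|_{L^{p(x)}}$ tends to $0$ as $t\to\infty$, so this term belongs to $S^{q(x)}_{0}([0,\infty):X)$.

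The remaining term $q(\cdot,u(\cdot))$ is the delicate one, and I expect its analysis to be the main obstacle. The difficulty is that $\widehat{q(\cdot,u(\cdot))}(t)(s)=q(t+s,u(t+s))$ carries the varying second argument $u(t+s)$ as $s$ ranges over $[0,1]$, whereas the hypothesis $\widehat{q}\in C_{0}([0,\infty)\times X:L^{q(x)}([0,1]:X))$ only furnishes decay of $\widehat{q}(t,y)$ with the second argument \emph{frozen}, uniformly for $y$ in compact sets. To bridge this I would exploit the relative compactness of $K'=\{u(t):t\in I\setminus E'\}$ from (v): cover $\overline{K'}$ by finitely many balls $B(y_{i},\delta)$, use the uniform-on-compacts decay of $\widehat{q}$ to make the frozen quantities $\|\widehat{q}(t,y_{i})\|_{L^{q(x)}}$ small for large $t$, and control the residual $q(t+s,u(t+s))-q(t+s,y_{i})$ by an equicontinuity property of $q$ in its second variable, uniform in the first, to be extracted from the joint continuity of $\widehat{q}$ on $[0,\infty)\times X$ together with compactness of $\overline{K'}$, exactly as in \cite[Proposition 2.7.3, Proposition 2.7.4]{nova-mono}. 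This yields $\widehat{q(\cdot,u(\cdot))}\in C_{0}$, hence $q(\cdot,u(\cdot))\in S^{q(x)}_{0}([0,\infty):X)$. Combining the three terms, $\widehat{f(\cdot,u(\cdot))}$ is a sum of an almost periodic function and a function in $C_{0}([0,\infty):L^{q(x)}([0,1]:X))$, so $f(\cdot,u(\cdot))\in AAPS^{q(x)}(I:X)$, as desired.
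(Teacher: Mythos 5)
Your decomposition $f(\cdot,u(\cdot))=g(\cdot,v(\cdot))+\bigl[g(\cdot,u(\cdot))-g(\cdot,v(\cdot))\bigr]+q(\cdot,u(\cdot))$ --- with Theorem \ref{vcb-show} applied to the pair $(g,v)$ for the almost periodic part, Lemma \ref{aux}(i),(iii) together with $\hat{\omega}\in C_{0}([0,\infty):L^{p(x)}([0,1]:X))$ for the Lipschitz difference, and the finite $\delta$-net over $\overline{K'}$ combined with the uniform-on-compacts decay of $\hat{q}$ for the last term --- is essentially the paper's own approach: the paper gives no independent proof, stating only that one argues as in \cite[Proposition 2.7.3, Proposition 2.7.4]{nova-mono}, which is precisely this three-term scheme carried out for constant exponents. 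Your account is therefore correct in method, including the accurate identification of $q(\cdot,u(\cdot))$ as the step where the real work lies.
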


\section{Generalized (asymptotical) almost periodicity in Lebesgue spaces with variable exponents
$L^{p(x)}:$ action of convolution products}\label{ne-mozem}

Throughout this section, we assume that $p\in {\mathcal P}([0,1])$ and a multivalued linear operator ${\mathcal A}$ fulfills the condition (P).
We will first investigate infinite convolution products. The results obtained can be simply incorporated in the study of existence and uniqueness of almost periodic solutions of the following abstract Cauchy differential inclusion of first order
$$
u^{\prime}(t)\in {\mathcal A}u(t)+g(t),\quad t\in {\mathbb R}
$$
and the following abstract Cauchy relaxation differential inclusion
\begin{align}\label{left-bruka}
D_{t,+}^{\gamma}u(t)\in -{\mathcal A}u(t)+g(t),\ t\in {\mathbb R},
\end{align}
where $D_{t,+}^{\gamma}$ denotes the Weyl-Liouville fractional derivative of order $\gamma \in (0,1)$ and $g: {\mathbb R} \times X \rightarrow X$ satisfies certain assumptions; see \cite{nova-mono} for further information in this direction. Keeping in mind composition principles clarified in the previous section, it is almost straightforward to reformulate some known results concerning semilinear analogues of the above inclusions (see e.g. \cite[Theorem 2.7.6-Theorem 2.7.9; Theorem 2.9.10-Theorem 2.9.11; Theorem 2.9.17-Theorem 2.9.18]{nova-mono}); because of that, this question 
will not be examined here for the sake of brevity.

We start by stating
the following generalization of \cite[Proposition 2.11]{EJDE} (the reflexion at zero keeps the spaces of Stepanov $p$-almost periodic functions unchanged, which may or may not be the case with the spaces of Stepanov $p(x)$-almost periodic functions):

\begin{prop}\label{ravi-and-variable}
Suppose that $q\in {\mathcal P}([0,1]),$ $1/p(x) +1/q(x)=1$
and $(R(t))_{t> 0}\subseteq L(X,Y)$ is a strongly continuous operator family satisfying that
$M:=\sum_{k=0}^{\infty}\|R(\cdot +k)\|_{L^{q(x)}[0,1]}<\infty .$ If $\check{g} : {\mathbb R} \rightarrow X$ is $S^{p(x)}$-almost periodic, then the function $G: {\mathbb R} \rightarrow Y,$ given by
\begin{align}\label{wer}
G(t):=\int^{t}_{-\infty}R(t-s)g(s)\, ds,\quad t\in {\mathbb R},
\end{align}
is well-defined and almost periodic.
\end{prop}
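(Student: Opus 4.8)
The plan is to turn the infinite convolution into a series of integrals over the unit intervals $[k,k+1]$, on each of which the variable exponent $p(\cdot)$ is defined, and then to control every term by the variable-exponent H\"older inequality of Lemma \ref{aux}(i). First I would substitute $\sigma = t-s$ and split the half-line to get
$$G(t)=\int_{0}^{\infty}R(\sigma)g(t-\sigma)\,d\sigma=\sum_{k=0}^{\infty}\int_{0}^{1}R(k+x)g(t-k-x)\,dx.$$
The decisive observation is that, viewed as a function of $x\in[0,1]$, the map $x\mapsto g(t-k-x)$ is a translate of the reflection $\check g$, namely $\check g(x+(k-t))$, so its norm is exactly $\|\hat{\check g}(k-t)\|_{L^{p(x)}[0,1]}$ and is therefore dominated by $\|\check g\|_{S^{p(x)}}$. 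This is precisely why the hypothesis is stated for $\check g$ rather than $g$.

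For well-definedness I would estimate each summand by bringing the norm inside the integral and applying Lemma \ref{aux}(i) with the conjugate pair $1=1/p(x)+1/q(x)$, obtaining
$$\Bigl\|\int_{0}^{1}R(k+x)g(t-k-x)\,dx\Bigr\|\leq 2\,\|R(k+\cdot)\|_{L^{q(x)}[0,1]}\,\|\check g\|_{S^{p(x)}}.$$
Summing over $k$ and invoking $M<\infty$ yields $\|G(t)\|\leq 2M\|\check g\|_{S^{p(x)}}$ for every $t$, so the Bochner integral converges absolutely and $G$ is bounded (measurability of $s\mapsto R(t-s)g(s)$ being routine from strong continuity of $(R(t))_{t>0}$). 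Continuity then follows because each partial term $\Phi_{k}(t):=\int_{0}^{1}R(k+x)g(t-k-x)\,dx$ is continuous --- the same estimate bounds $\|\Phi_{k}(t)-\Phi_{k}(t')\|$ by $2\|R(k+\cdot)\|_{L^{q(x)}[0,1]}\,\|\hat{\check g}(k-t)-\hat{\check g}(k-t')\|_{L^{p(x)}[0,1]}$, and $\hat{\check g}$ is (uniformly) continuous since $\check g$ is $S^{p(x)}$-almost periodic --- while the series $\sum_{k}\Phi_{k}$ converges uniformly by the tail of $M$.

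For almost periodicity, given $\epsilon>0$ I would take the relatively dense set of $\epsilon$-periods $\tau$ of the almost periodic map $\hat{\check g}$, so that $\sup_{u}\|\hat{\check g}(u+\tau)-\hat{\check g}(u)\|_{L^{p(x)}[0,1]}\leq\epsilon$; since the set of $\epsilon$-periods is symmetric under $\tau\mapsto-\tau$, the same bound holds with $-\tau$. Feeding the decomposition into $G(t+\tau)-G(t)$ and estimating term by term, the relevant norm is $\|\hat{\check g}(k-t-\tau)-\hat{\check g}(k-t)\|_{L^{p(x)}[0,1]}\leq\epsilon$ uniformly in $t$ and $k$, so $\|G(t+\tau)-G(t)\|\leq 2M\epsilon$ for all $t$. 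Hence every such $\tau$ is a $2M\epsilon$-period of $G$ and relative density is inherited from $\hat{\check g}$.

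The step I expect to be the main obstacle is the bookkeeping of the reflection and the directions of the shifts, so that the translates of $\check g$ that appear after substitution line up with the $\epsilon$-period condition --- this forces the use of the symmetry of the period set under $\tau\mapsto-\tau$. The variable exponent itself causes no real trouble: it enters only through the harmless constant $2$ in Lemma \ref{aux}(i), and the argument otherwise runs parallel to the classical constant-exponent case.
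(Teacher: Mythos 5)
Your proposal is correct and follows essentially the same route as the paper's proof: both split the convolution into integrals over unit intervals, apply the variable-exponent H\"older inequality of Lemma \ref{aux}(i) to each piece to obtain the bound $2M\sup_{t}\|\check g(\cdot-t)\|_{L^{p(x)}[0,1]}$, transfer the $\epsilon$-periods of $\hat{\check g}$ to $2M\epsilon$-periods of $G$, and use uniform continuity of $\hat{\check g}$ for the continuity of $G$. Your explicit remark about the symmetry of the period set under $\tau\mapsto-\tau$ is just the bookkeeping the paper handles implicitly by writing the period condition for $\check g$ directly as $\|\check g(t-\tau+\cdot)-\check g(t+\cdot)\|_{L^{p(x)}[0,1]}\leq\epsilon$ for all $t\in\mathbb{R}$.
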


\begin{proof}
Without loss of generality, we may assume that $X=Y.$
It is clear that, for every $ t\in {\mathbb R},$ we have that $G(t)=\int^{\infty}_{0}R(s)g(t-s)\, ds$ and that the last integral is absolutely convergent due to Lemma \ref{aux}(i) and $S^{p(x)}$-boundedness of function $\check{g}(\cdot):$\index{H\"older inequality}
\begin{align*}
 \int^{\infty}_{0}\|R(s)\|& \| g(t-s)\|\, ds=\sum _{k=0}^{\infty}  \int^{k+1}_{k}
\|R(s)\|\| g(t-s)\|\, ds 
\\& =\sum _{k=0}^{\infty}  \int^{1}_{0}
\|R(s+k)\|\| g(t-s-k)\|\, ds \\& \leq 2
\sum _{k=0}^{\infty}\|R(\cdot +k)\|_{L^{q(x)}([0,1] : X)} \|g(t-k-\cdot)\|_{L^{p(x)}([0,1] : X)}
\\& \leq 2M\sup_{t\in {\mathbb R}}\|\check{g}(\cdot-t)\|_{L^{p(x)}([0,1] : X)},
\end{align*}
for any $t\in {\mathbb R}.$
Let a number $\epsilon>0$ be fixed.
Then there is a finite number $l>0$
such that any subinterval $I$ of ${\mathbb R}$ of length $l$ contains a number
$\tau \in I$ such that
$\| \check{g}(t-\tau+\cdot)-\check{g}(t+\cdot) \|_{L^{p(x)}([0,1] : X)}\leq \epsilon, $ $ t\in {\mathbb R}.
$ Invoking Lemma \ref{aux}(i) and this fact, we get 
\begin{align*}
\| G(t+\tau)&-G(t)\|
\\ & \leq \int^{\infty}_{0}\|R(r) \|\cdot \| g(t+\tau -r)-g(t-r) \| \, dr
\\ & =\sum _{k=0}^{\infty} \int^{k+1}_{k}\|R(r) \| \cdot \| g(t+\tau -r)-g(t-r) \| \, dr
\\ & =\sum _{k=0}^{\infty} \int^{1}_{0}\|R(r+k) \| \cdot \| g(t+\tau -r-k)-g(t-r-k) \| \, dr
\\ & \leq 2\sum _{k=0}^{\infty} \|R(\cdot +k)\|_{L^{q(x)}[0,1]}\| g(t+\tau -\cdot-k)-g(t-\cdot-k) \|_{L^{p(x)}[0,1]}
\\ & =2\sum _{k=0}^{\infty} \|R(\cdot +k)\|_{L^{q(x)}[0,1]}\| \check{g}(\cdot-t-\tau +k)-\check{g}(\cdot-t+k) \|_{L^{p(x)}[0,1]}
\\ & \leq 2\epsilon \sum _{k=0}^{\infty} \|R(\cdot +k)\|_{L^{q(x)}[0,1]}= 2M \epsilon,\quad t\in {\mathbb R},
\end{align*}
which clearly implies that the set of all $\epsilon$-periods of $G(\cdot)$ is relatively dense in ${\mathbb R}$. It remains to be proved the uniform continuity of $G(\cdot).$ Since $\hat{\check{g}}(\cdot)$ is uniformly continuous, we have the existence of a number $\delta \in (0,1)$ such that
\begin{align}\label{loz}
\|\check{g}(\cdot-t')-\check{g}(\cdot -t)\|_{L^{p(x)}[0,1]}<\epsilon,\mbox{  provided  }t,\ t'\in {\mathbb R} \mbox{ and } |t-t'|<\delta.
\end{align}
For any $\delta'\in (0,\delta),$ the above computation with $\tau=\delta'=t'-t$ and \eqref{loz} together imply that, for every $t\in {\mathbb R},$
\begin{align*}
\bigl\| & G(t+\delta')-G(t)\bigr\|
\\& \leq 2\sum _{k=0}^{\infty} \|R(\cdot +k)\|_{L^{q(x)}[0,1]}\| \check{g}(\cdot-t'+k)-\check{g}(\cdot-t+k) \|_{L^{p(x)}[0,1]}
\\& \leq 2\epsilon \sum _{k=0}^{\infty} \|R(\cdot +k)\|_{L^{q(x)}[0,1]}= 2M \epsilon.
\end{align*}
This completes the proof of proposition.
\end{proof}

\begin{example}\label{zaeljjjo}
\begin{itemize}
\item[(i)] Suppose that $\beta \in (0,1)$ and $(R(t))_{t>0}=(T(t))_{t>0}$ is a degenerate semigroup generated by 
${\mathcal A}.$ Let us recall that there exists a finite constant $M>0$ such that
$\|T(t)\|\leq Mt^{\beta -1},$ $t\in (0,1]$ and $\|T(t)\|\leq Me^{-ct},$ $t\geq 1.$ Let $p_{0}>1$ be such that
$$
\frac{p_{0}}{p_{0}-1}(\beta -1) \leq -1,
$$ 
let $p\in {\mathcal P}([0,1]),$ and let $\|T(\cdot)\|_{L^{q(x)}[0,1]}<\infty .$
Assume that we have constructed a function $\check{g}\in APS^{p(x)}({\mathbb R} : X)$ such that $\check{g} \notin APS^{p}({\mathbb R} : X)$ for all $p\geq p_{0}$ (Question: Could we manipulate here somehow with the construction established in \cite[Example, p. 70]{bohr-folner}?) Then, in our concrete situation, \cite[Proposition 2.11]{EJDE} cannot be applied since 
$$
\frac{p}{p-1}(\beta -1) \leq -1,\quad p\in [1,p_{0}).
$$
Now we will briefly explain that $\sum_{k=0}^{\infty}\|R(\cdot +k)\|_{L^{q(x)}[0,1]}<\infty ,$ showing that Proposition \ref{ravi-and-variable} is applicable. Strictly speaking, for $k=0,$ $\|T(\cdot)\|_{L^{q(x)}[0,1]}<\infty$ by our assumption, while, for $k\geq 1,$ it can be simply shown that $\|R(\cdot +k)\|_{L^{q(x)}[0,1]}\leq Me^{-ck}$ so that  $\sum_{k=0}^{\infty}\|R(\cdot +k)\|_{L^{q(x)}[0,1]}<\infty ,$ as claimed.
\item[(ii)] By a mild solution of \eqref{left-bruka}, we mean the function $t\mapsto \int^{t}_{-\infty}R_{\gamma}(t-s)g(s)\, ds,$ $t\in {\mathbb R}.$ Let $p\in {\mathcal P}([0,1]),$ and let $\|R_{\gamma}(\cdot)\|_{L^{q(x)}[0,1]}<\infty .$ Then, for $k\geq 1,$ we have $\|R_{\gamma}(\cdot +k)\|_{L^{q(x)}[0,1]}\leq M_{2}k^{-1-\gamma}.$ Hence,  $\sum_{k=0}^{\infty}\|R_{\gamma}(\cdot +k)\|_{L^{q(x)}[0,1]}<\infty $ and we can apply Proposition \ref{ravi-and-variable}.
\end{itemize}
\end{example} 

In the following proposition, whose proof is very similar to that of \cite[Proposition 3.12]{toka-marek-prim}, we state some invariance properties of generalized asymptotical almost periodicity in Lebesgue spaces with variable exponents
$L^{p(x)}$ under the action of finite convolution products (see also \cite[Proposition 2.7.5, Lemma 2.9.3]{nova-mono} for similar results). This proposition generalizes \cite[Proposition 2.13]{EJDE} provided that $p>1$ in its formulation.

\begin{prop}\label{stepanov-almost-periodic-p(x)}
Suppose that $ p\in {\mathcal P}([0,1]),$ $ q\in {D}_{+}([0,1]),$ $1/p(x) +1/q(x)=1$
and $(R(t))_{t> 0}\subseteq L(X)$ is a strongly continuous operator family satisfying that, for every
$t\geq 0,$ we have that
$m_{t}:=\sum_{k=0}^{\infty}\|R(\cdot +t+k)\|_{L^{q(x)}[0,1]}<\infty .$
Suppose, further, that $\check{g} : {\mathbb R} \rightarrow X$ is $S^{p(x)}$-almost periodic, $q\in L_{S}^{p(x)}( [0,\infty) :X)$ and 
$f(t)=g(t)+q(t),$ $t\geq 0.$  Let $r_{1},\ r_{2}\in {\mathcal P}([0,1])$ and
the following hold:
\begin{itemize}
\item[(i)]  For every $t\geq 0$, the mapping $x\mapsto \int^{t+x}_{0}R(t+x-s) q(s)\, ds,$ $x\in [0,1]$ belongs to the space $ L^{r_{1}(x)}([0,1] : X) $ and we have
\begin{align*}
\lim_{t\rightarrow +\infty}\Biggl\| \int^{t+x}_{0}R(t+x-s) q(s)\, ds\Biggr\|_{L^{r_{1}(x)}[0,1]}=0.
\end{align*}
\item[(ii)] For every $t\geq 0,$ the mapping  $x\mapsto m_{t+x},$ $x\in [0,1]$ belongs to the space $L^{r_{2}(x)}[0,1]$ and we have
$$
\lim_{t\rightarrow +\infty}\bigl\| m_{t+x}\bigr\|_{L^{r_{2}(x)}[0,1]}=0.
$$
\end{itemize}
Then the function $H(\cdot),$ given by
\begin{align*}
H(t):=\int^{t}_{0}R(t-s)f(s)\, ds,\quad t\geq 0,
\end{align*}
is well-defined, bounded and belongs to the class $APS^{p(x)}({\mathbb R} : X)+S^{r_{1}(x)}_{0}([0,\infty):X) + S^{r_{2}(x)}_{0}([0,\infty):X),$ with the meaning clear.
\end{prop}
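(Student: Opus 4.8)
The plan is to exploit the given splitting $f=g+q$ and write, for $t\ge 0$,
$$
H(t)=\int_{0}^{t}R(t-s)g(s)\,ds+\int_{0}^{t}R(t-s)q(s)\,ds=:G_{1}(t)+G_{2}(t),
$$
matching $G_{1}$ to the almost periodic summand (via Proposition \ref{ravi-and-variable}) and $G_{2}$ to the $S^{r_{1}(x)}_{0}$-summand (via hypothesis (i)), while a tail term extracted from $G_{1}$ will supply the $S^{r_{2}(x)}_{0}$-summand. Well-definedness and the bound $\sup_{t\ge 0}\|H(t)\|<\infty$ I would obtain from the now-standard H\"older estimate of Lemma \ref{aux}(i): splitting $\int_{0}^{t}\|R(u)\|\,\|f(t-u)\|\,du$ over the unit intervals $[k,k+1]$ bounds it by $2\sum_{k}\|R(\cdot+k)\|_{L^{q(x)}[0,1]}\,\|f(t-k-\cdot)\|_{L^{p(x)}[0,1]}$, which is finite and uniform in $t$ because $m_{0}<\infty$ and both $g$ and $q$ are $S^{p(x)}$-bounded.

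For $G_{1}$ I would pass to the infinite convolution $G(t):=\int_{-\infty}^{t}R(t-s)g(s)\,ds$. Since its defining hypothesis $\sum_{k=0}^{\infty}\|R(\cdot+k)\|_{L^{q(x)}[0,1]}=m_{0}<\infty$ is exactly what is assumed here and $\check{g}$ is $S^{p(x)}$-almost periodic, Proposition \ref{ravi-and-variable} gives that $G$ is almost periodic on ${\mathbb R}$, hence $G\in APS^{p(x)}({\mathbb R}:X)$ by Proposition \ref{propa}. On $[0,\infty)$ one has $G_{1}(t)=G(t)-\Phi(t)$, where
$$
\Phi(t):=\int_{t}^{\infty}R(u)g(t-u)\,du=\int_{0}^{\infty}R(u+t)g(-u)\,du .
$$
Splitting this tail over $[k,k+1]$ and using that $g(-\cdot-k)=\check{g}(\cdot+k)$ on $[0,1]$ (so that no reflection of the exponent occurs) yields the pointwise bound $\|\Phi(T)\|\le 2\|\check{g}\|_{S^{p(x)}}\,m_{T}$ for every $T\ge 0$. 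Consequently $\|\Phi(t+x)\|\le 2\|\check{g}\|_{S^{p(x)}}\,m_{t+x}$ pointwise in $x\in[0,1]$, and Lemma \ref{aux}(iii) upgrades this to $\|\widehat{\Phi}(t)\|_{L^{r_{2}(x)}[0,1]}\le 2\|\check{g}\|_{S^{p(x)}}\,\|m_{t+\cdot}\|_{L^{r_{2}(x)}[0,1]}$, which tends to $0$ as $t\to+\infty$ by hypothesis (ii); thus $\Phi\in S^{r_{2}(x)}_{0}([0,\infty):X)$.

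For $G_{2}$, hypothesis (i) is precisely the assertion that $\widehat{G_{2}}(t)=G_{2}(t+\cdot)$ lies in $L^{r_{1}(x)}([0,1]:X)$ with $\|\widehat{G_{2}}(t)\|_{L^{r_{1}(x)}[0,1]}\to 0$ as $t\to+\infty$; once the continuity of $t\mapsto\widehat{G_{2}}(t)$ is verified (from the strong continuity of $(R(t))_{t>0}$ and the absolute convergence established above), this places $G_{2}\in S^{r_{1}(x)}_{0}([0,\infty):X)$. Collecting the three pieces, $H=G|_{[0,\infty)}+G_{2}-\Phi$ exhibits the desired membership of $H$ in $APS^{p(x)}({\mathbb R}:X)+S^{r_{1}(x)}_{0}([0,\infty):X)+S^{r_{2}(x)}_{0}([0,\infty):X)$.

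The main obstacle I expect is not the almost periodic component — Proposition \ref{ravi-and-variable} handles that cleanly — but the bookkeeping in the tail estimates when $p(x)$ is genuinely variable. The decisive point is that in the $g$-contribution the reflection $g(-u)$ converts into the forward translate $\check{g}(\cdot+k)$, so no reflected exponent $p(1-x)$ ever appears and $\|\check{g}\|_{S^{p(x)}}$ (a supremum over all of ${\mathbb R}$) controls everything; this is exactly where the two-sidedness of $\check{g}$ and Lemma \ref{aux}(iii) are indispensable. A secondary technical point is verifying genuine continuity (not merely decay at infinity) of $\widehat{G_{2}}$ and $\widehat{\Phi}$ into the variable-exponent spaces, so that they qualify as elements of $C_{0}([0,\infty):L^{r_{i}(x)}([0,1]:X))$; this rests on the strong continuity of $(R(t))_{t>0}$ together with the uniform convergence of the defining series furnished by $m_{0}<\infty$.
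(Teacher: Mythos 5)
Your decomposition $H=G|_{[0,\infty)}-\Phi+G_{2}$ --- with Proposition \ref{ravi-and-variable} supplying the almost periodic summand, the tail estimate $\|\Phi(T)\|\leq 2\|\check{g}\|_{S^{p(x)}}m_{T}$ combined with Lemma \ref{aux}(iii) and hypothesis (ii) supplying the $S^{r_{2}(x)}_{0}$-summand, and hypothesis (i) supplying the $S^{r_{1}(x)}_{0}$-summand --- is exactly the argument the paper intends (the proof is omitted there, with a reference to the analogous \cite[Proposition 3.12]{toka-marek-prim}), and your handling of the reflection $g(-u)=\check{g}(u)$ is the correct key point. The only imprecision is in your opening well-definedness/boundedness remark, where you control $\|f(t-k-\cdot)\|_{L^{p(x)}[0,1]}$ by saying ``both $g$ and $q$ are $S^{p(x)}$-bounded'': these are reflected translates, so for a genuinely variable exponent it is $\|\check{g}\|_{S^{p(x)}}$ (not a norm of $g$) that controls the $g$-terms --- as you yourself observe later --- while the $q$-contribution is not controlled by $\|q\|_{S^{p(x)}}$ at all and must be handled through hypothesis (i).
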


\begin{rem}\label{observ}
In \cite[Remark 2.14]{EJDE}, we have examined the conditions under which the function $H(\cdot)$ defined above is asymptotically almost periodic, provided that the function $g(\cdot)$ is $S^{p}$-almost periodic for some $p\in [1,\infty).$ The interested reader may try to analyze similar problems with function $\check{g}(\cdot)$ being $S^{p(x)}$-almost periodic for some $p\in {\mathcal P}([0,1]).$
\end{rem}

It is clear that Proposition \ref{stepanov-almost-periodic-p(x)} can be applied in the qualitative analysis of solutions for a wide class of inhomogeneous abstract Cauchy problems and inclusions. For example, we can simply apply Proposition \ref{stepanov-almost-periodic-p(x)} in the study of following fractional relaxation inclusion
\[
\hbox{(DFP)}_{f,\gamma} : \left\{
\begin{array}{l}
{\mathbf D}_{t}^{\gamma}u(t)\in {\mathcal A}u(t)+f(t),\ t> 0,\\
\quad u(0)=x_{0},
\end{array}
\right.
\]
where ${\mathbf D}_{t}^{\gamma}$ denotes the Caputo fractional derivative of order $\gamma \in (0,1],$ $x_{0}\in X$ and
$f : [0,\infty) \rightarrow X$ satisfies certain properties. By a classical solution of (DFP)$_{f,\gamma},$ we mean any function $u\in C([0,\infty) : X)$ satisfying that the function\index{solution!classical solution of (DFP)$_{f,\gamma}$}
${\mathbf D}_{t}^{\gamma}u(t)$ is well-defined on any finite interval $(0,T]$ and belongs to the space $C((0,T] : X),$ as well as that
$u(0)=u_{0}$ and
${\mathbf D}_{t}^{\gamma}u(t)
-f(t) \in  {\mathcal A}u(t)$ for $t>0.$ Under certain conditions \cite{nova-mono}, the classical solution of (DFP)$_{f,\gamma}$
is given by the formula
$$
u(t)=S_{\gamma}(t)x_{0}+\int^{t}_{0}R_{\gamma}(t-s)f(s)\, ds,\quad t\geq 0.
$$

Now we will present a few illustrative examples:

\begin{example}\label{illust-periodic}
...
\end{example}

\section{Conclusion and final remarks}\label{section5}

...

\end{document}